\documentclass[review]{elsarticle}

\usepackage{lineno,hyperref}
\modulolinenumbers[5]

\journal{Information Sciences}

\usepackage[utf8]{inputenc} 
\usepackage[T1]{fontenc}    
\usepackage{hyperref}       
\usepackage{url}            
\usepackage{booktabs}       
\usepackage{amsfonts}       
\usepackage{nicefrac}       
\usepackage{microtype}      

\usepackage{amsfonts,amscd,amssymb}
\usepackage{amsthm,amsmath}
\usepackage{graphicx}

\newtheorem{Theorem}{Theorem}
\newtheorem{Lemma}{Lemma}

\newtheorem{Corollary}{Corollary}

\newtheorem{Assumption}{Assumption}

\newcommand{\mc}{\mathcal}
\newcommand{\mbb}{\mathbb}
\newcommand{\mb}{\mathbf}









\bibliographystyle{elsarticle-num}

\begin{document}

\begin{frontmatter}

\title{Posterior concentration and fast convergence rates for generalized Bayesian learning}



\author[add1]{Lam Si Tung Ho}
\author[add2,add3,add6]{Binh T. Nguyen}
\author[add4]{Vu Dinh}
\author[add5]{Duy Nguyen}


\address[add1]{Department of Mathematics and Statistics, Dalhousie University, Halifax, Nova Scotia, Canada}
\address[add2]{Department of Computer Science, Faculty of Mathematics and Computer Science, University of Science, Ho Chi Minh City, Vietnam}
\address[add6]{Vietnam National University, Ho Chi Minh City, Vietnam}
\address[add3]{AISIA Research Lab }
\address[add4]{Department of Mathematical Sciences, University of Delaware, USA}
\address[add5]{Department of Statistics, University of Wisconsin-Madison, USA}

%

\begin{abstract}
In this paper, we study the learning rate of generalized Bayes estimators in a general setting where the hypothesis class can be uncountable and have an irregular shape, the loss function can have heavy tails, and the optimal hypothesis may not be unique.
We prove that under the multi-scale Bernstein's condition, the generalized posterior distribution concentrates around the set of optimal hypotheses and the generalized Bayes estimator can achieve fast learning rate.
Our results are applied to show that the standard Bayesian linear regression is robust to heavy-tailed distributions. 
\end{abstract}

\begin{keyword}
Bayesian learning \sep posterior concentration \sep fast rate \sep heavy-tailed loss \sep Bernstein condition
\end{keyword}

\end{frontmatter}


\section{Introduction}

There has been a growing interest in posterior concentration rates of Bayesian inference over the last decade. 
Posterior concentration allows us to uncover frequentist properties of Bayesian methods and implies that most of the posterior mass will be close to the truth in the frequentist sense.
Studying such properties enables designs of appropriate priors for Bayesian inference in various contexts \cite{rousseau2011asymptotic, arbel2013bayesian, rousseau2016frequentist}. 

Similar approaches have also been proposed in statistical learning theory. 
In such settings, one considers models of predictors defined relative to some loss functions and proves frequentist convergence bounds of generalized Bayes predictors constructed with respect to a \emph{posterior randomization measure}.
The most notable work on this direction is the framework of ``safe Bayesian,'' where the formulation for generalized Bayesian posterior can be tuned by an optimal learning rate \cite{grunwald2011safe}.
Instead of choosing priors, within such a framework, one can construct more flexible estimators over a wide range of hypothesis spaces,  losses, and model misspecifications. 

From another perspective, the topic of fast learning rate in statistical learning has become a subject of growing interest in recent works. 
The pursuit of a ``fast rate'' regime has led to many conditions in learning theory under which fast rates are possible such as low noise assumption \cite{audibert2007fast,dinh2015learning},  stochastic mixability condition \cite{mehta2014stochastic}, Bernstein's condition \cite{bartlett2006empirical}, v-central condition \cite{vanerven15a}, and multi-scale Bernstein's condition \cite{dinh2016fast}.
Traditionally, most works in this direction have primarily focused on bounded losses, and deviations from this expected behavior are worrisome, especially when the loss of the learning problem of interest is unbounded and/or has heavy tails. 

Recently, it has been shown that it is possible to generalize conditions for fast learning rates with unbounded and heavy-tailed losses. 
The fast learning rate for sub-gaussian and sub-exponential losses are done in the context of density estimation \cite{ zhang2006e, zhang2006information} and for general losses \cite{lecue2012general}, of which proofs of fast rates heavily employ the Bernstein's condition and the central condition. 
In \cite{hsu2016loss}, the authors provide an exponential concentration of the median-of-means estimator under heavy-tailed distributions to approximate minimization of smooth and strongly convex losses. 
Similarly, the paper \cite{mendelson2015aggregation} proposes studying the ``optimistic rate" under the small-ball condition for learning with heavy-tailed convex losses. 
Another effort to resolve this issue was shown in \cite{dinh2016fast} with their newly proposed multi-scale Bernstein's condition, which enables learning with heavy tails when the loss function is non-convex and the optimal hypothesis is not unique. 
Their analyses recover fast learning rates for empirical risk minimization (ERM) estimators under bounded losses, but, more significantly, also hold for heavy-tailed losses.   

The vast majority of the recent works in obtaining fast learning rates have taken place in the frequentist approach, whereas applications to generalized Bayesian estimators are unknown. 
In \cite{Grunwald16}, the authors take a further step to show that fast learnings in the generalized Bayesian setting are, indeed, attainable. 
However, the major drawbacks are that the optimal learning rate $\beta$ must be known in advance and that the hypothesis class is finite. 
The ``safe Bayesian'' methods \cite{grunwald2011safe} provide a framework to analyze a special form of generalized Bayes estimators employing the central condition, which cannot be applied to losses with polynomial tails \cite{vanerven15a}.
As a result, the feasibility of fast learning rates for heavy-tailed distributions under Bayesian frameworks remains unknown. 

Building upon the multi-scale Bernstein's condition, we analyze fast concentration rates of generalized Bayes estimators in a general framework where the hypothesis class can be infinite/uncountable and have an irregular shape, the loss function can have heavy tails, and the optimal hypothesis may not be unique. 
Our results demonstrate that learning rates faster than $\mathcal{O}(n^{-1/2})$ can be obtained. 
Moreover, depending on the regularity of the risk function and the complexity of the hypothesis class, the learning rate can be arbitrarily close to the optimal rate $\mathcal{O}(n^{-1})$.
We apply our results to show that the standard Bayesian linear regression is robust to heavy-tailed distributions.
Specifically, Bayesian linear regression with the regular square loss can achieve fast rate learning when the errors follow t-distributions.

\paragraph{Related work}
Bayesian framework has been applied extensively to a wide variety of research areas including ecology \cite{bachl2019inlabru}, evolutionary biology \cite{gill2017relaxed}, epidemiology \cite{ho2018birth,ho2018direct}, and economics \cite{geweke2003bayesian}.
However, theoretical properties of Bayesian methods have not been studied extensively as its frequentist counterparts, especially for heavy-tailed losses.
In particular, several frequentist approaches have been shown to perform well with heavy-tailed losses including ERM \cite{dinh2016fast,brownlees2015empirical}, median-of-means estimator \cite{hsu2016loss,lugosi2019mean}, k-mean clustering \cite{dinh2016fast,bachem2017uniform}, support vector machines \cite{christmann2009consistency}, Least Squares Estimator \cite{han2019convergence}.
Recently, much effort have been devoted to study the asymptotic theory of Bayesian inference \cite{rousseau2011asymptotic,grunwald2011safe,nguyen2016borrowing,dinh2017convergence,grunwald2017inconsistency,de2019safe}.
However, the lack of results for heavy-tailed losses has hindered the applicability of the Bayesian inference to such a scenario.
This is a major disadvantage compared to other frequentist methods.
Therefore, it is essential to establish a theoretical guarantee for Bayesian methods with heavy-tailed losses.
In this paper, we will bridge this gap for the Bayesian framework.

\section{Mathematical framework}
\label{sec:math}

Let $(\mathcal{X}, \zeta)$ be a measurable space and $Z=(X,Y)$ be a random variable taking values in $\mc{Z} = \mathcal{X} \times \mathcal{Y}$ with a probability distribution $P$ where $\mathcal{Y} \subset \mathbb{R}$. 
We assume that the hypothesis class $\mathcal{H}$ is a bounded subset of the space of square-integrable functions $L_2(\mathcal{X}, \zeta)$ with the convex hull $\overline{\mathcal{H}}$.

For a prior distribution $\mu$ on $\mc{H}$ and a set $D$ of $n$ independent and identically distributed samples $\{Z_1, Z_2, \ldots, Z_n\}$ of $Z$, the \emph{posterior randomization measure} given the data over the hypothesis space $\mathcal{H}$ has a density
\begin{align*}
p_D(h) &\propto \prod_{i=1}^n Q(Z_i \mid h) 
= \exp \left \{ - \sum_{i=1}^n \mathcal \ell(Z_i, h) \right \}
\end{align*}
with respect to $\mu$, where $\prod_{i=1}^n Q(Z_i \mid h)$ is called generalized likelihood function and $\ell: \mathcal{Z} \times \overline{\mathcal{H}} \to \mathbb{R}$ is a function defined by 
$
\ell (Z, h) = - \log Q(Z \mid h), 
$
hereafter referred to as the \emph{loss function}.

In the standard Bayesian setting, $Q(Z \mid h) = P(Z \mid h)$ where $P(Z \mid h)$ is the regular density function and the posterior randomization measure is just the posterior distribution. 
When $Q(Z \mid h) \ne P(Z \mid h)$, this setting becomes the quasi-Bayesian approach.
For various problems of Bayesian learning using \emph{mean-field variational inference}, the generalized likelihood function belongs to a family of functions that can reasonably approximate the likelihood function. 
In the ``safe Bayesian'' framework \cite{grunwald2011safe}, $Q(Z \mid h) = [P(Z \mid h)]^{\eta}$ where $\eta$ is a tuned parameter obtained by minimizing a cumulative log-loss.
It is worth noticing that there is a connection between Bayesian setting and PAC-Bayes which has been discussed elsewhere \cite[see e.g. ][and the references therein]{germain2016pac}. 

For a given set of samples $D$, the \emph{generalized Bayes estimator} is defined as 
\[ 
\hat{h} = \int_{\mc{H}} p_D(h) h d\mu.
\]
Predictions with generalized Bayes estimator are obtained by taking the average of the prediction of the hypotheses in $h$.
The estimator, thus, does not necessarily belong to $\mathcal{H}$ and is an improper estimator. 
This type of estimator has appeared in various contexts in machine learning. 
For example, as noted in \cite{grunwald2012safe}, the safe Bayesian algorithm can be regarded as just running the standard Hedge-algorithm \cite{freund1995desicion} and then making a Cesaro-averaged prediction of the previous Hedge predictions. 
Similarly, the Weighted Average algorithm \cite{kivinen1999averaging, cuong2013generalization} makes prediction based on the weighted average predictions of all the hypotheses in the hypothesis space with the weight function
\[
w(h) = \exp\left (-{c_1 \sum_{i=1}^n{|h(X_i) - Y_i|^{c_2}}}\right)
\]
and thus fits into this framework. 

We define the \emph{risk function} as
$
R(h) = \mathbb{E}_{Z\sim P}[\ell(Z, h)]
$ 
and the set of hypotheses whose risks are less than or equal to a threshold value $\gamma$ as
$
\mc{H}_\gamma = \{ h \in \mc{H}: R(h) \leq \gamma \}.
$
For convenience, we assume that 
\begin{equation}
\inf_{h \in \mc{H}} R(h) = \inf\{\gamma: \mu(\mc{H}_\gamma) > 0 \} := \gamma^ *.
\label{en:opt-risk}
\end{equation}
Here, $\gamma^*$ can be considered as ``optimal risk".

We note that this assumption can be relaxed because the set $\{ h \in \mc{H}:  R(h) < \gamma^*\}$ has measure $0$.
The rationale is that a single best hypothesis is meaningless in the Bayesian setting when $\mc{H}$ is uncountable. 
Hence, we should compare the generalized Bayes estimator to a set of good hypotheses that has a positive measure, as suggested in \cite{freund2004generalization, cuong2013generalization}. 
The measure of such a set of ``good hypotheses'' plays a central role in our analyses and directly influences the concentration rates.

In this paper, we are interested in the concentration of the posterior around the set of optimal hypotheses $\mc{H}_{\gamma^*}$ and the convergence properties of the generalized Bayes estimator $\hat{h}$. 
Our mathematical framework is designed to analyze the problem of Bayesian learning for unbounded and/or heavy tail losses.
We recall that a random variable $S$ is said to have a heavy right tail distribution if
\[
\lim_{s \to \infty} e^{\lambda s}\mathbb{P}[S>s] = \infty
\]
for all $\lambda>0$ and the definition is similar for a heavy left tail distribution.
Learning with a heavy-tailed loss means that $\ell(Z,h)$ has a heavy tail distribution from some or all hypotheses $h \in \mathcal{H}$.
To enable analyses of fast concentration rates, we impose the following regularity conditions:

\begin{Assumption}[Regularity condition for risk function]
The risk function $R$ is convex and Lipschitz on $\overline{\mathcal{H}}$.
\label{assump:covLip}
\end{Assumption}

We observe that although the risk function $R$ is convex on the convex hull $\overline{\mathcal{H}}$, it may still have multiple global minimizers on $\mathcal{H}$ because we do not put any additional assumption on the geometry of $\mathcal{H}$.  
Figure \ref{fig:example} gives an example where this scenario happens.
In this example, the convex function $f(x,y) = x^2 + y^2$ achieves the global minimum at two different points $(-1,0)$ and $(1,0)$.

\begin{figure*}[t]
\begin{center}
\includegraphics[width=0.5\linewidth]{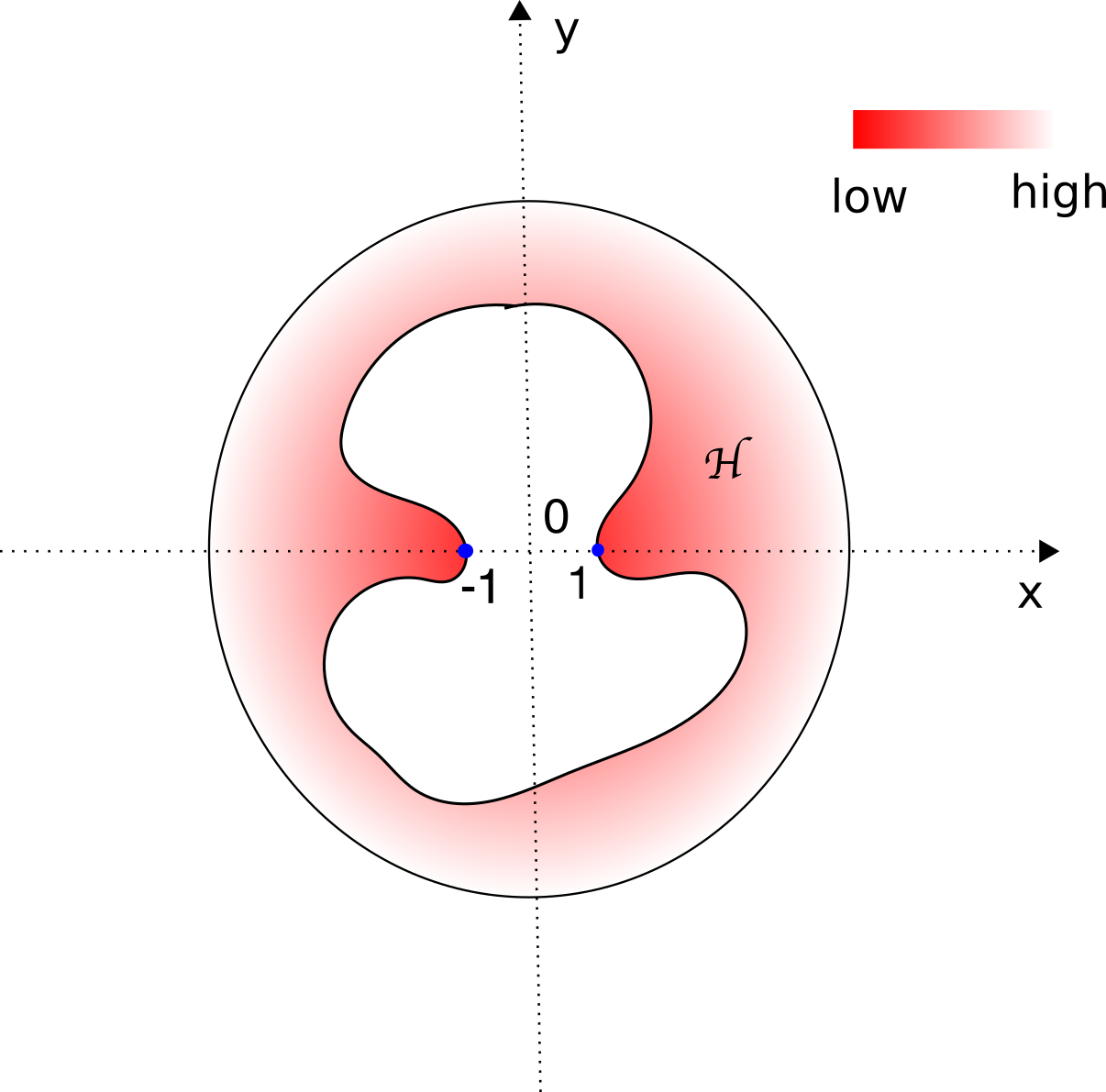}
\caption{An example where a convex function $f(x,y) = x^2 + y^2$ has two global minimizers (blue dots) on a non-convex set $\mc{H}$. 
The heat map represents the value of $f(x,y)$ on $\mc{H}$.}
\label{fig:example}
\end{center}
\end{figure*}

\begin{Assumption}[Multi-scale Bernstein's condition]
There exist a finite partition of $\mathcal{H}=\cup_{i \in I}{\mathcal{H}_i}$, positive constants $B = \{ B_i \}_{i \in I}$, constants $\alpha = \{ \alpha_i \}_{i \in I}$ in $(0, 1]$, and a finite set $\mc{H}^* = \{ h^*_i \}_{i \in I} \subset \mathcal{H}_{\gamma^*}$ such that 
\[
\mbb{E} [\ell(Z, h)-\ell(Z, h^*_i)]^2 \le B_i [R(h) - \gamma^*]^{\alpha_i}
\]
 for all $i \in I$ and $h \in \mathcal{H}_i$.
\label{assump:Bern}
\end{Assumption}

The multi-scale Bernstein's condition is a generalization of the classical Bernstein's condition introduced in \cite{dinh2016fast} to analyze fast convergence rates of the empirical risk minimizer estimator in unbounded losses settings. 
If a loss function satisfies the Bernstein's condition, then it also satisfies the multi-scale Bernstein's condition.
However, while the Bernstein's condition forces the risk function to have a unique minimizer \cite{vanerven15a},  the multi-scale Bernstein's condition does not have this restriction.
Note that this condition implies that $\mathcal{H}_{\gamma^*}$ is not empty.

\begin{Assumption}[Regularity condition for prior]
There exist $C_1(\mu) > 0$, $C_2(\mu) > 0$, and $\kappa > 0$ such that  
\[
\mu(B(h^*,\epsilon)) \geq C_1 \exp( - C_2 \epsilon^{-\kappa})
\] 
for all $h^* \in \mc{H}_{\gamma^*}$. 
Here, $B(h^*,\epsilon)$ is the ball in $L_2(\mathcal{X}, \zeta)$ with the center $h^*$ and the radius $\epsilon$.
\label{assump:mu}
\end{Assumption}

Assumption $\ref{assump:mu}$ belongs to a class of regularity assumption called \emph{prior mass assumption} and requires that the prior measures put a sufficient amount of mass near $\mathcal{H}_{\gamma^*}$. 
Such an assumption is standard in the analysis of the convergence rate of posterior measures and can be verified for a broad class of probability distributions \cite{ghosal2000convergence}. 
For example, this condition holds for the uniform distribution and the truncated normal distribution on any compact finite-dimensional manifold.

We also need to impose some conditions on the complexity of the hypothesis space. 
For convenience, let $\mc{G}$ denote the set of all functions $g: \mc{Z} \to \mbb{R}$ such that $g(Z)=\ell(Z,h)$ for some  $h \in \mathcal{H}$.
For $\epsilon>0$, let $\mathcal{N}(\epsilon, \mc{G}, L_2(P))$ be the \emph{covering number} of $(\mc{G}, L_2(P))$; that is, $\mathcal{N}(\epsilon, \mc{,G}, L_2(P))$ is the minimal number of balls of radius $\epsilon$ needed to cover $\mc{G}$. 
We define the \emph{universal metric entropy} of $\mc{G}$ by 
\[
H(\epsilon, \mc{G}) = \sup_{Q} \log \mathcal{N}(\epsilon, \mc{G}, L_2(Q)),
\] 
where the supremum is taken over the set of all probability measures $Q$ concentrated on some finite subset of $\mc{Z}$. 
We make the following two assumptions regarding the complexity of $\mc{G}$. 

 \begin{Assumption}[Finite covering number]
There exist $\mc{C}_1 \ge 1$ and $K_1 \ge 1$ such that
\[
\log \mathcal{N}(\epsilon, \mathcal{G}, L_2(P)) \le \mc{C}_1 \log(K_1/\epsilon) \quad \forall \epsilon \in (0,K_1].
\]
\label{assump:cov_num}
\end{Assumption}
 
 \begin{Assumption}[Universal entropy bounds]
There exist $\mc{C}_2 \ge 1$ and $K_2 \ge 1$ such that
\[
H(\epsilon, \mc{G}) \le \mc{C}_2 \log(K_2/\epsilon) \quad \forall \epsilon \in (0,K_2].
\]
\label{assump:uni_entropy}
\end{Assumption}
Denote $\mc{C} = \max \{ \mc{C}_1, \mc{C}_2 \}$. 
From now on, we will use $\mc{C}$ as the common constant for both Assumptions \ref{assump:cov_num} and \ref{assump:uni_entropy}.

Finally, we need a way to control the heavy tails of the loss functions.
We employ the integrability condition of the envelope function, which has been studied previously in \cite{dinh2016fast,lecue2012general}.

\begin{Assumption}[Integrability of the envelope function]
There exist $W>0$ and $r \ge 4 \mc{C}$ such that 
\[
\left(\mathbb{E} \sup_{g \in \mathcal{G}} {|g|^{r}}\right)^{1/r} ~\le~ W.
\]
\label{assump:int}
\end{Assumption}

For convenience, we denote the losses $\ell(Z, h)$ by $\ell(h)$, and define the empirical loss:
\[
 \ell_D( h) = \frac{1}{n}\sum_{i=1}^n{\ell(Z_i, h)}.
\]
For each hypothesis $h_0\in \mc{H}$, we define a ball $\mc{B}(h_0,\epsilon)$ of radius $\epsilon>0$ such that
\[
\mc{B}(h_0,\epsilon) = \left\{ {h\in \mc{H}: \left \{ \mbb{E} [\ell(h)-\ell(h_0)]^2 \right \}^{1/2} \leq \epsilon} \right\}.
\]

It is worth noticing that if a hypothesis belongs to a ball $\mc{B}(h^*,\epsilon)$ for some $h^* \in \mc{H}_{\gamma^*}$, then its risk is bounded by $\gamma^* + \epsilon$.
To be specific, 
\[
\bigcup_{h^* \in \mc{H}_{\gamma^*}}{ \mc{B}(h^*, \epsilon) } \subset H_{\gamma^* + \epsilon}.
\]


\section{Fast concentration rates}
\label{sec:fast_rate}

In this section, we prove that generalized Bayes estimators achieve fast rate learning under the Assumptions introduced in the previous section.
Our proof contains two main steps:

\paragraph{Step 1:} We prove that the posterior distribution concentrates around the set of optimal hypotheses $\mc{H}_{\gamma^*}$ exponentially fast as $n$ goes to infinity.
In other words, we prove that the posterior distribution of the hypotheses which are far away from $\mc{H}_{\gamma^*}$ converges to 0 exponentially fast.
The main technique of this step is proving that the difference in the empirical loss between a hypothesis which is close to $\mc{H}_{\gamma^*}$  and a hypothesis which is far away from $\mc{H}_{\gamma^*}$  is sufficiently large.

\paragraph{Step 2:} We bound the convergence rate of the generalized Bayes estimator to the optimal risk $\gamma^*$.
To do so, we show that the generalized Bayes estimator is very close to the average of all hypotheses near $\mc{H}_{\gamma^*}$. 
This is due to the fact that the posterior distribution of the hypotheses far away from $\mc{H}_{\gamma^*}$ is small, which was proved in Step 1. 
Therefore, the risk of the generalized Bayes estimator is close to the average risk of all hypotheses near $\mc{H}_{\gamma^*}$, which is also close to $\gamma^*$.

In the rest of the paper, for some $\beta > 0$, let $\epsilon = n^{-\beta}$ and $\mc{H}^\epsilon$ denote the finite set containing $\mc{H}^*$ such that
\[
\bigcup_{h \in \mc{H}^\epsilon} \mc{B}(h,\epsilon) = \mc{H}
\]
and that 
$
|\mathcal{H}^{\epsilon}| \le \left( K/\epsilon \right)^{\mathcal{C}} + |\mc{H}^*|,
$ 
where $\mc{H}^*$ is defined in Assumption \ref{assump:Bern}.
Note that Assumption \ref{assump:cov_num} guarantees the existence of $\mc{H}^{\epsilon}$.
We now provide the details for the proof of these two steps. 

\subsection{Posterior concentration}

\begin{Theorem}[Posterior concentration]
Assume that Assumptions \ref{assump:covLip} -- \ref{assump:int} hold.
Let $\beta$ be a positive number such that
\[
\beta <  \max \left \{ \frac{1 - 2 \sqrt{\mc{C}/r}}{2 - \min_{i \in I} \alpha_i}, \frac{1}{1 + \kappa} \right \}.
\]
Then, for any $\delta \in (0,1)$, there exist $C_{r, \beta}, C'_{r, \beta}  > 0$ and $N_{\delta, r, B, \alpha, \kappa}>0$ such that for $n \ge N_{\delta, r, B, \alpha, \kappa}$ and $\epsilon = n^{-\beta}$, we have:
\[
\sup_{h \in \mc{H} \setminus \mc{H}_{\gamma^* + C_{\delta, r, \beta}  \epsilon}} p_D(h) 
\leq  \frac{1}{C_1} \exp \left \{- \frac{1}{2} \left [ C_{r, \beta} + \left ( \frac{C'_{r,\beta}}{\delta} \right )^{1/[2 \sqrt{\mc{C} r}]} \right ] n^{1 - \beta} \right \}
\]
with  probability at least $1 - \delta$. 
\label{thm:postcon}
\end{Theorem}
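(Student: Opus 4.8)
The plan is to bound the posterior density pointwise through the ratio
\[
p_D(h) = \frac{\exp\{-n\,\ell_D(h)\}}{\int_{\mc{H}} \exp\{-n\,\ell_D(h')\}\, d\mu(h')},
\]
estimating the numerator from above when $h$ lies far from $\mc{H}_{\gamma^*}$ and the denominator from below using the prior mass near an optimal hypothesis. Both estimates are driven by a single uniform deviation bound comparing the empirical increment $\ell_D(h)-\ell_D(h_i^*)$ with its population counterpart $R(h)-\gamma^*$, and it is through this bound that Assumptions \ref{assump:Bern}--\ref{assump:int} enter.

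The crux, and what I expect to be the main obstacle, is to show that with probability at least $1-\delta$, for every $i \in I$ and every $h \in \mc{H}_i$,
\[
\ell_D(h) - \ell_D(h_i^*) \;\ge\; \tfrac{1}{2}\big(R(h)-\gamma^*\big) - \rho_n,
\]
where $\rho_n$ is a fluctuation of order $n^{-(1-2\sqrt{\mc{C}/r})/(2-\alpha_i)}$ up to logarithmic factors. Since Assumption \ref{assump:int} only furnishes an $r$-th moment of the envelope, the centered losses $g_h = \ell(\cdot,h)-\ell(\cdot,h_i^*)$ may be heavy-tailed and no sub-Gaussian or sub-exponential concentration is available. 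I would therefore truncate at a level $T$: the bounded part $g_h\,\mathbf{1}_{\{|g_h|\le T\}}$ is handled by a Bernstein/Bousquet-type inequality together with the covering-number and universal-entropy bounds (Assumptions \ref{assump:cov_num}--\ref{assump:uni_entropy}), while the tail part $g_h\,\mathbf{1}_{\{|g_h|> T\}}$ is controlled through the moment $W$ by Markov's inequality. The multi-scale Bernstein condition (Assumption \ref{assump:Bern}) converts the variance proxy $\mathbb{E} g_h^2 \le B_i (R(h)-\gamma^*)^{\alpha_i}$ into a localized, self-bounding estimate, so that the empirical increment dominates half of the true gap once $R(h)-\gamma^*$ exceeds a threshold of order $\epsilon = n^{-\beta}$. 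Optimizing $T$ balances the entropy exponent $\mc{C}$ against the moment order $r$; this is what produces the correction $2\sqrt{\mc{C}/r}$ in the admissible range and, through the Markov step, the $\delta$-dependence $(C'_{r,\beta}/\delta)^{1/[2\sqrt{\mc{C}r}]}$.

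Granting this bound, for $h \notin \mc{H}_{\gamma^*+C_{\delta,r,\beta}\epsilon}$ the numerator satisfies $\exp\{-n\ell_D(h)\} \le \exp\{-n\ell_D(h_i^*) - \tfrac n2(R(h)-\gamma^*) + n\rho_n\}$. For the denominator I would fix $h^*\in\mc{H}_{\gamma^*}$, restrict the integral to $B(h^*,\epsilon)$, and apply Jensen's inequality to the convex map $\exp$, obtaining
\[
\int_{\mc{H}} \exp\{-n\,\ell_D(h')\}\, d\mu(h') \;\ge\; \mu\big(B(h^*,\epsilon)\big)\,\exp\left\{-\frac{n}{\mu(B(h^*,\epsilon))}\int_{B(h^*,\epsilon)} \ell_D(h')\, d\mu(h')\right\}.
\]
The Lipschitz property of $R$ (Assumption \ref{assump:covLip}) forces $R\le\gamma^*+O(\epsilon)$ on this ball, and since the inner integral averages over $h'$ the remaining empirical fluctuation is mild, so the averaged exponent is at most $\gamma^*+O(\epsilon+\rho_n)$. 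Assumption \ref{assump:mu} then gives $\mu(B(h^*,\epsilon))\ge C_1\exp(-C_2\epsilon^{-\kappa})$, which is the origin both of the prefactor $1/C_1$ and of the cost $\exp(C_2\epsilon^{-\kappa})=\exp(C_2 n^{\beta\kappa})$.

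Forming the ratio, the empirical losses $\ell_D(h_i^*)$ and $\ell_D(h^*)$ at optimal hypotheses cancel up to a term controlled by the same uniform bound at scale $\epsilon$, leaving
\[
p_D(h) \;\le\; \frac{1}{C_1}\exp\left\{C_2\,\epsilon^{-\kappa} - \tfrac{n}{2}\big(R(h)-\gamma^*\big) + O\big(n\rho_n + n\epsilon\big)\right\}.
\]
Substituting $\epsilon=n^{-\beta}$ and $R(h)-\gamma^*>C_{\delta,r,\beta}\epsilon$ makes the middle term at most $-\tfrac12 C_{\delta,r,\beta}n^{1-\beta}$, so it remains to absorb the positive contributions $C_2 n^{\beta\kappa}$ and $O(n\rho_n+n\epsilon)$ into the constant. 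Here $\beta<1/(1+\kappa)$ forces $n^{\beta\kappa}=o(n^{1-\beta})$ and neutralizes the prior-mass cost, whereas $\beta<(1-2\sqrt{\mc{C}/r})/(2-\min_{i\in I}\alpha_i)$ guarantees $\rho_n=o(\epsilon)$ and neutralizes the fluctuation; identifying $C_{\delta,r,\beta}$ with the bracketed radius constant $C_{r,\beta}+(C'_{r,\beta}/\delta)^{1/[2\sqrt{\mc{C}r}]}$ and taking it large enough to dominate the leftover slack then yields the stated inequality for all $n\ge N_{\delta,r,B,\alpha,\kappa}$. The admissible range appearing as a \emph{maximum} of the two thresholds — rather than a minimum — is the delicate accounting point: it signals that the argument should be split into a complexity-limited regime and a prior-mass-limited regime and routed through whichever of the two bounds remains valid, and making this split airtight is the last thing I would need to verify.
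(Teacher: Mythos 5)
Your overall architecture --- writing $p_D(h)$ as a ratio, beating down the numerator with a uniform deviation bound relative to $\ell_D(h_i^*)$, and lower-bounding the normalizer by the prior mass of an $\epsilon$-ball around an optimum --- is the same as the paper's, and your truncation-plus-entropy-plus-Markov derivation of the key deviation inequality is in effect a re-proof of the two lemmas the paper simply imports from the ERM analysis under the multi-scale Bernstein condition (its Lemmas \ref{lem:boundemp} and \ref{lem:boundrisk}, i.e.\ Lemma~3.5 and Theorem~3.2 of \cite{dinh2016fast}). The genuine gap is in the denominator and the ensuing ``cancellation'' step. You fix a \emph{single} $h^* \in \mc{H}_{\gamma^*}$, restrict the normalizing integral to $B(h^*,\epsilon)$, and then assert that $\ell_D(h_i^*)$ and $\ell_D(h^*)$ ``cancel up to a term controlled by the same uniform bound at scale $\epsilon$.'' No such control exists: $h_i^*$ and $h^*$ are distinct optimal hypotheses, generally lying in different cells of the partition, and the multi-scale Bernstein condition gives no bound on $\mbb{E}\left[\ell(Z,h_i^*)-\ell(Z,h^*)\right]^2$, which can be of order one --- non-uniqueness of well-separated optima is precisely the situation the condition is designed to allow. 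Hence $\ell_D(h_i^*)-\ell_D(h^*)$ is a centered i.i.d.\ average with non-vanishing variance, of typical size $n^{-1/2}$, contributing $O(n^{1/2})$ to the exponent after multiplication by $n$. In the fast-rate regime $\beta>1/2$ --- the only regime where the theorem improves on the classical rate --- this swamps the target term $n\epsilon=n^{1-\beta}$, and your final inequality fails. (Your Jensen step has a milder version of the same problem: the ``mild fluctuation'' of the ball-averaged empirical loss must be controlled via the uniform over-the-ball bound relative to the ball's own center, not by raw averaging.)

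The fix is exactly the paper's move: the normalizer $\int_{\mc{H}} e^{-n\ell_D(h')}\,d\mu(h')$ is one number, so you are free to lower-bound it \emph{differently for each cell}. For $h\in\mc{H}_i$, restrict the integral to the ball around $h_i^*$ itself; then the reference empirical loss in numerator and denominator is the same quantity (up to the uniform bound of Lemma \ref{lem:boundemp} over that ball), no cross-optimum comparison ever arises, and the union bound over the finitely many $i\in I$ finishes the argument. With that repair your proof goes through. Your closing worry about the ``max'' is also well-founded: the paper's own proof invokes both constraints on $\beta$ simultaneously (Lemma \ref{lem:boundrisk} needs $\beta<(1-2\sqrt{\mc{C}/r})/(2-\alpha_i)$, and absorbing the prior cost $C_2\epsilon^{-\kappa}$ into $nC_{\delta,r,\beta}\epsilon$ needs $\beta<1/(1+\kappa)$), so the proof supports a minimum of the two thresholds, and no regime-splitting argument of the kind you envision appears there.
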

 
The detailed proof of this theorem is provided in the Appendix.
Here, we want to give some insights about the proof's arguments. 
Let us consider the simplest case when $\mathcal{H}$ is finite and the optimal hypothesis $h^*$ is unique. 
In this setting, for any other hypothesis $h \in \mathcal{H}$, by the strong law of large numbers, we have
\[
\ell_D(h) - \ell_D(h^*) \approx R(h) - R(h^*)
\]
as the sample size $n$ goes to infinity.
Informally, this implies that
$
p_D(h)/p_D(h^*) \to 0.
$
Since $\mc{H}$ is finite and
$
\sum_{h \in \mc{H}}{p_D(h)} = 1,
$
we deduce that the distribution concentrates around $\mc{H}_{\gamma^*} = \{ h^* \}$. 

In the case when $\mathcal{H}$ is infinite, comparing between two hypotheses becomes less meaningful. 
To extend the result, we need to provide a uniform bound on 
$
\ell_D(h) - \ell_D(h') 
$
for all  $h \in U_1$ and $h' \in U_2$, where $U_2$ is a neighborhood of $h^*$ and $U_1$ is a set that covers most of the outside of $U_2$.
This estimate is obtained by a combination of the following two Lemmas, of which the optimal hypothesis $h^*$ acts as an intermediary for comparisons. 
 
\begin{Lemma}
Assume that Assumptions \ref{assump:covLip}, \ref{assump:Bern}, \ref{assump:cov_num}, \ref{assump:uni_entropy}, and \ref{assump:int} hold.
For any $\beta < 1 - 2 \sqrt{\mc{C}/r}$, there exists $C_{r, \beta}, C'_{r, \beta}  > 0$ and such that  for all $n \in \mbb{N}$ and $\delta \in (0,1)$, we have:
\[
\left|{\ell_D(h)-\ell_D(h_0)}\right| \le \left [ C_{r, \beta} + \left ( \frac{C'_{r,\beta}}{\delta} \right )^{1/[2 \sqrt{\mc{C} r}]} \right ] \epsilon, \quad
\forall h_0 \in \mc{H}^{\epsilon},h \in \mc{B}({h_0,\epsilon})
\]
with probability at least $1 - \delta$.
\label{lem:boundemp}
\end{Lemma}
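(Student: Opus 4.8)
The plan is to reduce the claim to a uniform deviation bound for a centered empirical process and then exploit the small $L_2(P)$-diameter of each ball together with the moment condition on the envelope. Writing $\nu_n(f) = \frac{1}{n}\sum_{i=1}^n\big(f(Z_i) - \mbb{E} f\big)$ for the centered empirical process and setting $f_{h,h_0} = \ell(h) - \ell(h_0)$, I decompose
\[
\ell_D(h) - \ell_D(h_0) = \big(R(h) - R(h_0)\big) + \nu_n(f_{h,h_0}).
\]
For $h \in \mc{B}(h_0,\epsilon)$ the defining inequality $\mbb{E}[\ell(h)-\ell(h_0)]^2 \le \epsilon^2$ controls both pieces at the level we need: by Cauchy--Schwarz the bias satisfies $|R(h)-R(h_0)| \le \epsilon$, and the same inequality shows every $f_{h,h_0}$ has variance at most $\epsilon^2$. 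Thus the whole problem is to bound, uniformly over the centers $h_0 \in \mc{H}^\epsilon$ and over $h$ in each ball, the fluctuation term $\nu_n(f_{h,h_0})$ by a multiple of $\epsilon$.

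The core estimate is a moment bound for the localized supremum. Fixing a center $h_0$, consider $\mc{F}_{h_0} = \{f_{h,h_0} : h \in \mc{B}(h_0,\epsilon)\}$, whose members have variance $\le \epsilon^2$ and share the envelope $2\sup_{g\in\mc{G}}|g|$, which lies in $L_r$ with norm $\le 2W$ by Assumption~\ref{assump:int}. I would bound $\mbb{E}\big[\sup_{f \in \mc{F}_{h_0}}|\nu_n(f)|^{p}\big]$ for the exponent $p = 2\sqrt{\mc{C}\,r}$ via symmetrization followed by a chaining argument driven by the \emph{universal} entropy bound $H(\eta,\mc{G}) \le \mc{C}\log(K_2/\eta)$ of Assumption~\ref{assump:uni_entropy} (its supremum over all finitely supported $Q$ is what permits chaining against the empirical measure). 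The variance part of this bound is of order $\epsilon\sqrt{\mc{C}\log(1/\epsilon)/n}$, while the heavy-tailed part is governed by estimates of the form $\big\|\max_{i\le n}\sup_{g}|g(Z_i)|\big\|_{p} \le 2W\,n^{1/p}$. The crucial point is that $p = 2\sqrt{\mc{C} r}$ satisfies $p \le r$ precisely because Assumption~\ref{assump:int} requires $r \ge 4\mc{C}$ (indeed $2\sqrt{\mc{C} r} \le r \iff r \ge 4\mc{C}$), so the $L_r$ envelope does control the $L_p$-norm of the maximum. A union bound over the $|\mc{H}^\epsilon| \le (K/\epsilon)^{\mc{C}} + |\mc{H}^*|$ centers supplied by Assumption~\ref{assump:cov_num}, combined with Markov's inequality on each $p$-th moment, then yields a uniform high-probability bound; since Markov at level $\delta' = \delta/|\mc{H}^\epsilon|$ produces a factor $(1/\delta')^{1/p}$, this is exactly where the term $\big(C'_{r,\beta}/\delta\big)^{1/[2\sqrt{\mc{C} r}]}$ in the statement originates.

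It remains to verify that $\epsilon = n^{-\beta}$ with $\beta < 1 - 2\sqrt{\mc{C}/r}$ keeps every contribution at the scale $\epsilon$. The variance/chaining term carries an extra $\sqrt{\log n / n}\to 0$ and is harmless; the delicate balancing is among the heavy-tail envelope factor $W\,n^{1/p-1}$, the covering-number inflation $(K/\epsilon)^{\mc{C}/p} = (Kn^{\beta})^{\frac{1}{2}\sqrt{\mc{C}/r}}$ entering through the union bound, and the target $n^{-\beta}$; tracking the exponents of $n$ is what forces an admissible range of the shape $\beta < 1 - 2\sqrt{\mc{C}/r}$. I expect this balancing, together with the moment bound for the supremum of a genuinely heavy-tailed empirical process, to be the \textbf{main obstacle}: symmetrization and chaining deliver the sub-Gaussian variance term without difficulty, but controlling the $L_p$-norm of the maximal envelope term and checking that it is absorbed by $\epsilon$ is the technical heart of the argument, and it is exactly what the constraints $r \ge 4\mc{C}$ and $\beta < 1 - 2\sqrt{\mc{C}/r}$ are designed to make feasible.
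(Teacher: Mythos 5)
Your proposal is correct and is essentially the paper's own route: the paper does not prove Lemma~\ref{lem:boundemp} directly but simply invokes Lemma~3.5 of \cite{dinh2016fast}, and what you have written is a faithful reconstruction of how that cited result works --- the bias/fluctuation decomposition over the balls $\mc{B}(h_0,\epsilon)$, a $p$-th moment bound for the localized empirical process with $p = 2\sqrt{\mc{C}r}$ (admissible precisely because $r \ge 4\mc{C}$), and Markov's inequality combined with a union bound over the net $\mc{H}^\epsilon$, which is exactly the source of the $(C'_{r,\beta}/\delta)^{1/[2\sqrt{\mc{C}r}]}$ factor and of the constraint $\beta < 1 - 2\sqrt{\mc{C}/r}$. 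Your exponent bookkeeping (envelope term $Wn^{1/p-1}$, covering inflation $(K/\epsilon)^{\mc{C}/p}$, target $n^{-\beta}$) checks out, so the only difference from the paper is that you carry out the argument rather than cite it.
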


\begin{Lemma}
Assume that Assumptions \ref{assump:covLip}, \ref{assump:Bern}, \ref{assump:cov_num}, and \ref{assump:int} hold.
For any $a > 0$, $\delta \in (0,1)$, and a positive number $\beta$ statisfying
\[
\beta < (1 - 2 \sqrt{\mc{C}/r})/ (2 - \alpha_i) \quad \forall i \in I,
\]
there exists $N_{a, \delta, r, B, \alpha}>0$ such that for $n \ge N_{a, \delta, r, B, \alpha}$, we have
\[
\forall h \in \mc{H}^{\epsilon} \setminus \mc{H}_{\gamma^* + a \epsilon}, \exists h^* \in \mc{H}^*: \ell_D(h) - \ell_D(h^*)  > \frac{a \epsilon}{4}
\]
with probability at least $1-\delta$.
Here, $I$ and $\mc{H}^*$ are defined in Assumption \ref{assump:Bern}.
\label{lem:boundrisk}
\end{Lemma}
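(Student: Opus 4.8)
The plan is to establish this as a finite union bound over the cover $\mc{H}^\epsilon$, since---unlike Lemma \ref{lem:boundemp}---this statement does not invoke the universal entropy (Assumption \ref{assump:uni_entropy}) and so needs only per-point concentration together with the cardinality bound $|\mc{H}^\epsilon| \le (K/\epsilon)^{\mc{C}} + |\mc{H}^*|$. First I would fix any $h \in \mc{H}^\epsilon \setminus \mc{H}_{\gamma^*+a\epsilon}$. By Assumption \ref{assump:Bern} it lies in exactly one partition cell $\mc{H}_i$, and I set $h^* = h^*_i \in \mc{H}^*$ (recall $\mc{H}^* \subset \mc{H}^\epsilon$). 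Writing $g_j = \ell(Z_j,h) - \ell(Z_j,h^*)$ and using $R(h^*) = \gamma^*$, the target decomposes as
\[
\ell_D(h) - \ell_D(h^*) = \big(R(h) - \gamma^*\big) + \frac{1}{n}\sum_{j=1}^n \big(g_j - \mbb{E} g_j\big).
\]
Membership $h \notin \mc{H}_{\gamma^* + a\epsilon}$ gives the deterministic signal $\Delta := R(h) - \gamma^* > a\epsilon$, so the failure event $\{\ell_D(h) - \ell_D(h^*) \le a\epsilon/4\}$ forces the centered average below $a\epsilon/4 - \Delta$, i.e.\ a lower-tail deviation of magnitude at least $\Delta - a\epsilon/4 > \tfrac34 a\epsilon$. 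Assumption \ref{assump:covLip} keeps $R$ finite and $\Delta$ uniformly bounded on the bounded class $\mc{H}$, so the relevant constants are uniform.

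The concentration step is where the assumptions combine. Assumption \ref{assump:Bern} bounds the per-sample variance by $\mbb{E} g_j^2 \le B_i \Delta^{\alpha_i}$, and since $\Delta \ge a\epsilon$ the deviation I must rule out has magnitude $\Delta - a\epsilon/4 \ge \tfrac34\Delta$. A variance-only (e.g.\ Bernstein) exponent would then read $-c\,n\Delta^{2-\alpha_i}/B_i \le -c\,n(a\epsilon)^{2-\alpha_i}/B_i$, which equals $-c'\,n^{1-\beta(2-\alpha_i)}$ after substituting $\epsilon = n^{-\beta}$; this tends to $-\infty$ precisely under $\beta < 1/(2-\alpha_i)$. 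Because the losses are heavy-tailed I cannot apply Bernstein directly, so I would truncate $g_j$ at a level $T$, apply a Bernstein bound to the bounded part, and control the truncation bias by the $r$-th moment via Assumption \ref{assump:int} (note $|g_j| \le 2\sup_{g\in\mc{G}}|g|$, hence its $r$-th moment is at most $(2W)^r$). Taking a union bound over the $(K/\epsilon)^{\mc{C}} + |\mc{H}^*|$ cells---a factor only polynomial in $n$ since $\epsilon^{-\mc{C}} = n^{\beta\mc{C}}$---against the (stretched-)exponential per-point bound, and choosing $N_{a,\delta,r,B,\alpha}$ large, drives the total failure probability below $\delta$.

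The main obstacle will be the heavy-tailed concentration and, specifically, extracting the correction $2\sqrt{\mc{C}/r}$ that sharpens the naive threshold $\beta < 1/(2-\alpha_i)$ into the stated $\beta < (1-2\sqrt{\mc{C}/r})/(2-\alpha_i)$. This factor emerges from optimizing the truncation level $T$: raising $T$ shrinks the moment-tail bias $\lesssim (2W)^r/T^{r-1}$ but degrades the Bernstein exponent through the additive range term $T\cdot t$ in its denominator, and the optimal $T$ is pinned by forcing the resulting exponent to dominate the cardinality cost $\log|\mc{H}^\epsilon| \sim \mc{C}\log(1/\epsilon)$; balancing these against $n$ and $\epsilon$ is exactly what produces the $\sqrt{\mc{C}/r}$ term. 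Two bookkeeping points must hold uniformly: the variance bound $B_i\Delta^{\alpha_i}$ depends on the possibly large, unknown gap $\Delta$, so the estimate must cover all $\Delta \ge a\epsilon$ simultaneously by exploiting that the deviation to be ruled out also grows like $\Delta$; and since $I$ is finite I can pass to the worst cell, replacing $\alpha_i$ by $\min_{i} \alpha_i$ and absorbing $\max_i B_i$ into constants, which is consistent with the binding constraint $\beta < (1-2\sqrt{\mc{C}/r})/(2 - \min_{i} \alpha_i)$ appearing in Theorem \ref{thm:postcon}.
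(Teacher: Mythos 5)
Your proposal is correct, but it takes a genuinely different route from the paper, which in fact offers no self-contained proof of Lemma \ref{lem:boundrisk}: the lemma is imported verbatim as Theorem 3.2 of \cite{dinh2016fast}, where it is established by empirical-process techniques --- moment bounds for suprema over excess-risk shells, obtained from the entropy and envelope conditions and converted to probability statements via Markov's inequality at moment order roughly $2\sqrt{\mc{C}r}$; that method is the source of both the exponent correction $2\sqrt{\mc{C}/r}$ and the constants $(C'_{r,\beta}/\delta)^{1/[2\sqrt{\mc{C}r}]}$ appearing in Lemma \ref{lem:boundemp} and Theorem \ref{thm:postcon}. Your route instead exploits the fact that the statement concerns only the deterministic net $\mc{H}^{\epsilon}$, whose cardinality is polynomial in $n$, so per-point truncated Bernstein plus a union bound suffices, and it does close the argument: with truncation level $T=n^{1-\beta}/\log^{2}n$ the bias $(2W)^r/T^{r-1}$ is $o(\epsilon)$, the event that some $|g_j|$ exceeds $T$ costs at most $n(2W)^r/T^r$ per point, the Bernstein exponent is of order $\min\{n\Delta^{2-\alpha_i}/B_i,\ n\Delta/T\}$, and summing over the $O(n^{\beta\mc{C}})$ net points all terms vanish whenever $\beta<\min\{1/(2-\alpha_i),\ (r-1)/(r+\mc{C})\}$. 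Since $2-\alpha_i\ge 1$, $\mc{C}\ge 1$, and $\sqrt{\mc{C}r}\le(\mc{C}+r)/2$, one checks $(1-2\sqrt{\mc{C}/r})/(2-\alpha_i)\le 1-2\sqrt{\mc{C}/r}\le(r-1)/(r+\mc{C})$, so your admissible range contains the range claimed in the lemma and there is no gap. This also dissolves the ``main obstacle'' you anticipate: the factor $2\sqrt{\mc{C}/r}$ will not --- and need not --- emerge from your truncation trade-off, which instead produces the larger threshold $(r-1)/(r+\mc{C})$; that factor is an artifact of the chaining/moment argument of \cite{dinh2016fast}, and the lemma only asserts the conclusion for $\beta$ below its (smaller) stated threshold. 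As for what each approach buys: yours is elementary, self-contained, and for this particular lemma yields a wider range of $\beta$; the cited machinery remains indispensable elsewhere, since Lemma \ref{lem:boundemp} requires uniformity over the uncountable balls $\mc{B}(h_0,\epsilon)$ (hence Assumption \ref{assump:uni_entropy}), which a union bound cannot reach --- consistent with your correct observation that Assumption \ref{assump:uni_entropy} is not among this lemma's hypotheses.
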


Lemma \ref{lem:boundemp} is a consequence of Lemma 3.5 in \cite{dinh2016fast} and Lemma \ref{lem:boundrisk} is 
Theorem 3.2 in  \cite{dinh2016fast}.
Lemma $\ref{lem:boundrisk}$ ensures that a hypothesis that has small empirical loss will also have small risk. 
This result provides an alternative to concentration bound, which may not exist. 
It is similar to the techniques of using one-sided inequalities for learning without concentration bound, established in \cite{mendelson2015learning}. 

Finally, when the optimal hypothesis is not unique, we need to utilize the multi-scale Bernstein's condition to partition the hypothesis spaces into regions where local behavior of the empirical loss function can be controlled, and combine the estimates in later steps. 
We note that the feasibility of this approach comes from the fact that the multi-scale Bernstein's condition is a local condition. 

From now on, to ease the notation, we denote
\[
C_{\delta, r, \beta} = \frac{1}{2} \left [ C_{r, \beta} + \left ( \frac{C'_{r,\beta}}{\delta} \right )^{1/[2 \sqrt{\mc{C} r}]} \right ].
\]

\subsection{Learning rates}

\begin{Theorem}[Learning rate]
Assume that Assumptions \ref{assump:covLip} -- \ref{assump:int} hold.
Let $\beta$ be a positive number satisfying 
\[
\beta <  \max \left \{ \frac{1 - 2 \sqrt{\mc{C}/r}}{2 - \min_{i \in I} \alpha_i}, \frac{1}{1 + \kappa} \right \}.
\]
Then, for any $\delta \in (0,1)$, there exists $N_{\delta, r, \beta, \mu, \kappa} > 0$ such that 
\[ 
\mbb{P} \left( \hat{h}_n \in \mc{H}_{\gamma^* + 2 C_{\delta,r,\beta} \epsilon}  \right) \geq 1 - \delta,
\]
for all $n \geq N_{\delta, r, \beta, \mu, \kappa}$ and $\epsilon = n^{-\beta}$.
\label{thm02}
\end{Theorem}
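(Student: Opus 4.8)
The plan is to convert the posterior concentration of Theorem~\ref{thm:postcon} into a risk bound for $\hat{h}_n$ by exploiting the convexity of $R$. Since $\hat{h}_n = \int_{\mc{H}} h\, p_D(h)\, d\mu$ is an average of hypotheses in $\mc{H}$ against the probability measure $p_D\,d\mu$ (recall $p_D$ is a density with respect to $\mu$, so $\int_{\mc{H}} p_D(h)\,d\mu = 1$), it lies in the convex hull $\overline{\mc{H}}$ on which $R$ is convex by Assumption~\ref{assump:covLip}. Jensen's inequality then gives
\[
R(\hat{h}_n) = R\!\left( \int_{\mc{H}} h\, p_D(h)\, d\mu \right) \le \int_{\mc{H}} R(h)\, p_D(h)\, d\mu,
\]
so it suffices to control the averaged excess risk $\int_{\mc{H}} (R(h) - \gamma^*)\, p_D(h)\, d\mu$, which is nonnegative by the definition of $\gamma^*$ in~\eqref{en:opt-risk}.

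First I would record that $R$ is bounded on $\overline{\mc{H}}$: since $\mc{H}$ is a bounded subset of $L_2(\mc{X},\zeta)$ its convex hull has finite diameter, and a Lipschitz function on a set of finite diameter is bounded, so $M := \sup_{h \in \overline{\mc{H}}} (R(h) - \gamma^*) < \infty$. Next I would split the hypothesis space at the concentration threshold of Theorem~\ref{thm:postcon}, writing $\mc{A} = \mc{H}_{\gamma^* + C_{\delta,r,\beta}\epsilon}$ and $\mc{A}^c = \mc{H}\setminus\mc{A}$, and decompose
\[
\int_{\mc{H}} (R(h) - \gamma^*)\, p_D(h)\, d\mu
= \int_{\mc{A}} (R(h) - \gamma^*)\, p_D(h)\, d\mu
+ \int_{\mc{A}^c} (R(h) - \gamma^*)\, p_D(h)\, d\mu.
\]
On $\mc{A}$ the integrand satisfies $0 \le R(h) - \gamma^* \le C_{\delta,r,\beta}\epsilon$ by definition of $\mc{H}_\gamma$, so since $\int_{\mc{A}} p_D\, d\mu \le 1$ the first term is at most $C_{\delta,r,\beta}\epsilon$.

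The second term is where Theorem~\ref{thm:postcon} enters. On the event of probability at least $1-\delta$ on which that theorem holds, every $h \in \mc{A}^c$ satisfies $p_D(h) \le \tfrac{1}{C_1}\exp\{ - C_{\delta,r,\beta} n^{1-\beta} \}$; bounding the integrand by $M$ and the prior mass $\mu(\mc{A}^c)$ by $1$ then gives
\[
\int_{\mc{A}^c} (R(h) - \gamma^*)\, p_D(h)\, d\mu
\le \frac{M}{C_1}\exp\{ - C_{\delta,r,\beta} n^{1-\beta} \}.
\]
Because the admissible range forces $\beta < 1$ and hence $1-\beta > 0$, this tail decays faster than any power of $n$, so there exists $N_{\delta,r,\beta,\mu,\kappa}$ such that for all $n \ge N_{\delta,r,\beta,\mu,\kappa}$ the right-hand side is at most $C_{\delta,r,\beta} n^{-\beta} = C_{\delta,r,\beta}\epsilon$. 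Combining the two terms yields $R(\hat{h}_n) - \gamma^* \le 2 C_{\delta,r,\beta}\epsilon$ on the same event, i.e. $\hat{h}_n \in \mc{H}_{\gamma^* + 2C_{\delta,r,\beta}\epsilon}$ with probability at least $1-\delta$.

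I expect the genuinely delicate points to be bookkeeping rather than conceptual: justifying the Jensen step for the (Bochner-type) integral defining $\hat{h}_n$ in $L_2$, equivalently that $\hat{h}_n \in \overline{\mc{H}}$ so that $R$ may legitimately be evaluated and is convex there, and calibrating $N_{\delta,r,\beta,\mu,\kappa}$ so that the exponentially small tail is dominated by the polynomial target $C_{\delta,r,\beta}n^{-\beta}$. The dependence of this threshold on $\mu$ and $\kappa$ enters only through the constant $C_1$ and the validity range of Theorem~\ref{thm:postcon}, which already absorbs Assumption~\ref{assump:mu}; beyond that theorem and the boundedness of $R$, no new estimate should be required.
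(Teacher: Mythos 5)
Your proof is correct, and it reaches the paper's conclusion by a somewhat different and in fact cleaner decomposition. Both arguments rest on the same two pillars---Theorem \ref{thm:postcon} for the exponential smallness of $p_D$ outside the good set $\mc{A}=\mc{H}_{\gamma^*+C_{\delta,r,\beta}\epsilon}$, and convexity of $R$ on $\overline{\mc{H}}$---but they split the problem differently. You apply Jensen's inequality once, globally, to get $R(\hat h_n)\le\int_{\mc{H}}R(h)\,p_D(h)\,d\mu$, and then split this \emph{risk} integral over $\mc{A}$ and $\mc{A}^c$; the bad-set contribution is killed by the sup-bound on $p_D$ times the bound $M$ on the excess risk, so the Lipschitz hypothesis enters only to conclude that $R$ is bounded on the bounded set $\mc{H}$. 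The paper instead splits the $L_2$-valued barycenter itself: it introduces the posterior mass $\nu$ of $\mc{A}$, compares $R(\hat h_n)$ with $R$ evaluated at the renormalized good-set barycenter $\int_{\mc{A}}h\,\frac{p_D(h)}{\nu}\,d\mu$ via the Lipschitz constant $L$ and the bound $M=\sup_{h\in\mc{H}}\|h\|_2$, and applies Jensen only on $\mc{A}$; this produces the extra terms $L\bigl\|\int_{\mc{A}^c}h\,p_D\,d\mu\bigr\|_2$ and $\frac{1-\nu}{\nu}L\bigl\|\int_{\mc{A}}h\,p_D\,d\mu\bigr\|_2$, each bounded by $LM(1-\nu)$, together with the need to verify $\nu>0$. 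Your route avoids the renormalization and the $L_2$ bookkeeping entirely; what the paper's route makes explicit, in the spirit of its ``Step 2'' outline, is that $\hat h_n$ is close in $L_2$ to the average of the near-optimal hypotheses, not merely close in risk. The two caveats you flag are shared by the paper's proof rather than specific to yours: the Jensen step for the Bochner-type integral is justified because a convex Lipschitz function is the supremum of continuous affine minorants, and the conclusion $\hat h_n\in\mc{H}_{\gamma^*+2C_{\delta,r,\beta}\epsilon}$ must in both cases be read as the risk bound $R(\hat h_n)\le\gamma^*+2C_{\delta,r,\beta}\epsilon$, since $\hat h_n$ need not belong to $\mc{H}$.
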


\begin{proof}
We define
\[
M = \sup_{h \in \mc{H}} \|h\|_2 < \infty, \quad \text{and} \quad
\nu = \int_{\mc{H}_{\gamma^* + C_{\delta,r,\beta} \epsilon}} {p_D(h) d\mu} \leq 1.
\]
Note that $M$ is finite because $\mc{H}$ is a bounded subset of $L^2(\mathcal{X}, \zeta)$.
On the other hand, by Theorem \ref{thm:postcon}, with probability at least $1 - \delta$:
\[
1 - \nu =  \int_{\mc{H} \setminus\mc{H}_{\gamma^*+C_{\delta,r,\beta} \epsilon}} {p_D(h) d\mu} \leq \frac{\exp \left \{- C_{\delta, r, \beta} n \epsilon \right \}}{C_1}.
\]
Hence, when $n$ is sufficient large, we have $\nu > 0$ with probability at least $1 - \delta$.

By Assumption \ref{assump:covLip}, $R$ is convex and Lipchitz in $\overline{\mathcal{H}}$.
Therefore, 
\[
\int_{\mc{H}} {h p_D(h) d\mu} \quad \text{and} \quad \int_{\mc{H}_{\gamma^*+C_{\delta,r,\beta} \epsilon}} {h \frac{p_D(h)}{\nu} d\mu}
\]
belong to $\overline{\mathcal{H}}$ and there exists a Lipchitz constant $L$ such that

\begin{multline*}
\left | R \left( {\int_{\mc{H}} {h p_D(h) d\mu}} \right) - R \left({\int_{\mc{H}_{\gamma^*+C_{\delta,r,\beta} \epsilon}} {h \frac{p_D(h)}{\nu} d\mu}} \right) \right | \\
\leq L \left \| \int_{\mc{H}} {h p_D(h) d\mu} - \int_{\mc{H}_{\gamma^*+C_{\delta,r,\beta} \epsilon}} {h \frac{p_D(h)}{\nu} d\mu} \right \|_2.
\end{multline*}

We deduce that
\begin{multline*}
R(\hat{h}_n) = R \left( {\int_{\mc{H}} {h p_D(h) d\mu}} \right)
 \leq R \left({\int_{\mc{H}_{\gamma^*+C_{\delta,r,\beta} \epsilon}} {h \frac{p_D(h)}{\nu} d\mu}} \right) \\
 +  L \left \| { {\int_{\mc{H} \setminus\mc{H}_{\gamma^*+C_{\delta,r,\beta} \epsilon}} {h p_D(h) d\mu}} }\right \|_2 
 + \frac{1 - \nu}{\nu} L \left \| {\int_{\mc{H}_{\gamma^*+C_{\delta,r,\beta} \epsilon}} { h p_D(h) d\mu}}  \right \|_2.
\end{multline*}

We have
\begin{multline*}
 R \left({\int_{\mc{H}_{\gamma^* + C_{\delta,r,\beta} \epsilon}} {h \frac{p_D(h)}{\nu} d\mu}} \right)
\leq \int_{\mc{H}_{\gamma^* + C_{\delta,r,\beta} \epsilon}} {R(h) \frac{p_D(h)}{\nu} d\mu} \\
 \leq \int_{\mc{H}_{\gamma^* + C_{\delta,r,\beta} \epsilon}} {(\gamma^* + C_{\delta,r,\beta} \epsilon) \frac{p_D(h)}{\nu} d\mu} = \gamma^* + C_{\delta,r,\beta} \epsilon.
\end{multline*}
Moreover,
\[
\left \| { {\int_{\mc{H} \setminus\mc{H}_{\gamma^*+C_{\delta,r,\beta} \epsilon}} {h p_D(h) d\mu}} }\right \|_2 
\leq {\int_{\mc{H} \setminus\mc{H}_{\gamma^*+C_{\delta,r,\beta} \epsilon}} {\| h \|_2 p_D(h) d\mu}}  
\leq  M (1- \nu).\,
\]
and
\[
\left \| {\int_{\mc{H}_{\gamma^*+C_{\delta,r,\beta} \epsilon}} { h p_D(h) d\mu}}  \right \|_2 
\leq {\int_{\mc{H}_{\gamma^*+C_{\delta,r,\beta} \epsilon}} {\| h \|_2 p_D(h) d\mu}}  
\leq M \nu.
\]

We conclude that with probability at least $1 - \delta$,
\[
R(\hat{h}_n)  \leq \gamma^* + C_{\delta,r,\beta} \epsilon + 2 L M (1 - \nu)
 \leq \gamma^* + C_{\delta,r,\beta} \epsilon + 2 L M \frac{\exp \left \{- C_{\delta, r, \beta} n \epsilon \right \}}{C_1}.
\]

Hence, when $n$ is sufficiently large, we have
\[
R(\hat{h}) \leq  \gamma^* + 2 C_{\delta,r,\beta} \epsilon
\]
with probability at least $1 - \delta$, which completes the proof for the theorem.
\end{proof}
 
The result of Theorem $\ref{thm02}$ implies 
\begin{Corollary}
For all $\delta \in (0, 1)$, 
$
R(\hat h_n) = \gamma^* +  \mathcal{O}(n^{-\beta})
$
with probability at least $1-\delta$, where 
\[
\beta <  \max \left \{ \frac{1 - 2 \sqrt{\mc{C}/r}}{2 - \min_{i \in I} \alpha_i}, \frac{1}{1 + \kappa} \right \}.
\]
When $r$ is sufficiently large, $\min_{i \in I} \alpha_i =1$, and $\kappa$ is sufficiently small, we achieve convergence rates arbitrarily close to $\mathcal{O}(n^{-1})$. 
\end{Corollary}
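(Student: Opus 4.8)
The plan is to read the corollary off directly from Theorem~\ref{thm02}, since the statement is essentially a reformulation of that theorem in big-$\mathcal{O}$ language together with an optimization over the admissible exponents. First I would fix $\delta \in (0,1)$ and a $\beta$ in the admissible range, and invoke Theorem~\ref{thm02} to obtain a threshold $N_{\delta,r,\beta,\mu,\kappa}$ such that $\hat h_n \in \mc{H}_{\gamma^* + 2C_{\delta,r,\beta}\epsilon}$ with probability at least $1-\delta$ for all $n \geq N_{\delta,r,\beta,\mu,\kappa}$, where $\epsilon = n^{-\beta}$.

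The next step is to unpack the set membership. By the definition $\mc{H}_\gamma = \{h \in \mc{H}: R(h) \leq \gamma\}$, the event $\hat h_n \in \mc{H}_{\gamma^* + 2C_{\delta,r,\beta}\epsilon}$ is exactly the bound $R(\hat h_n) \leq \gamma^* + 2C_{\delta,r,\beta} n^{-\beta}$. Because the constant $C_{\delta,r,\beta}$ does not depend on $n$, this reads as $R(\hat h_n) - \gamma^* \leq 2 C_{\delta,r,\beta} n^{-\beta} = \mathcal{O}(n^{-\beta})$ with probability at least $1-\delta$, which is the claimed rate. Here the big-$\mathcal{O}$ should be read as a one-sided bound on the excess risk $R(\hat h_n)-\gamma^*$; this is the meaningful quantity for fast learning, and a matching lower bound is neither expected nor needed, since $\hat h_n$ lives in the convex hull $\overline{\mc{H}}$ and its risk may in fact dip below $\gamma^*$ (as in the two-minimizer example of Figure~\ref{fig:example}).

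For the second assertion, I would optimize the admissible upper bound on $\beta$ over the model parameters. Letting $r \to \infty$ sends $2\sqrt{\mc{C}/r} \to 0$, so the first term in the maximum tends to $1/(2 - \min_{i\in I}\alpha_i)$; imposing $\min_{i\in I}\alpha_i = 1$ turns this into $1$. Simultaneously, letting $\kappa \to 0$ sends the second term $1/(1+\kappa)$ to $1$. Hence the supremum of the admissible range of $\beta$ approaches $1$, so $\beta$ may be chosen arbitrarily close to $1$, yielding a rate arbitrarily close to $\mathcal{O}(n^{-1})$.

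I do not foresee a genuine obstacle, as the corollary is an immediate consequence of Theorem~\ref{thm02}; the only point requiring care is the interpretation of the big-$\mathcal{O}$ symbol as a high-probability upper bound on the excess risk rather than a two-sided estimate.
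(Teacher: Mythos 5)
Your proposal is correct and matches the paper's treatment: the paper presents this corollary as an immediate consequence of Theorem~\ref{thm02}, which is exactly your route of unpacking the definition of $\mc{H}_{\gamma^* + 2C_{\delta,r,\beta}\epsilon}$ into the bound $R(\hat h_n) - \gamma^* \leq 2C_{\delta,r,\beta}n^{-\beta}$ and then letting $r \to \infty$, $\min_{i\in I}\alpha_i = 1$, $\kappa \to 0$ to push the admissible $\beta$ toward $1$. Your additional remark that the big-$\mathcal{O}$ is a one-sided bound on the excess risk (since $\hat h_n \in \overline{\mc{H}}$ may have risk below $\gamma^*$) is a valid clarification the paper leaves implicit.
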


Hence, fast learning rates for generalized Bayesian estimators are available within our framework. 
In general, the order of convergence depends on the regularity of the loss function (via the multi-scale Bernstein's order) and the balance between the complexity of the hypothesis class and the thickness of the tail of the loss's distribution.

\section{Robustness of Bayesian linear regression}
\label{sec:ex}

In this section, we will apply our results to show that Bayesian linear regression is robust to heavy-tailed distributions.
To be specific, we consider the following standard linear regression setting:
\[
Y_i = \mb{X}_i u_0 + \epsilon_i, ~~i=1,2,\ldots,n
\]
where $Y_i \in \mbb{R}$, $\mb{X}_i \in \mc{X} \in  \mbb{R}^{d}$, $u_0 \in \mbb{R}^d$, and $\epsilon_i$ are i.i.d random variables which follow a t-distribution with degree of freedom $k$.

We will prove that even if we do not know that $\epsilon_i$ is heavy-tailed and just assume that $\epsilon_i$ follows a standard normal distribution, the Bayesian linear regression still achieves fast rate learning.
Given a proper prior $\pi_u$ for $u$, the posterior distribution of $u$ has the following form:
\begin{equation}
p_D(u \mid \{Y_i\}_{i=1}^n) \propto \pi_u \exp \left \{ \sum_{i=1}^n {- \frac{(Y_i - \mb{X}_i u)^2}{2}} \right \},
\label{eqn:Bayes}
\end{equation}
which corresponds to our setting with the loss function $\ell(Y, \mb{X}, u) = ( Y - \mb{X} u )^2$.

\begin{Theorem}
Assume that $\| u_0 \|_2 \leq M_u$, $\mc{X}$ is bounded in $\|.\|_2$ by $M_X$, $k > 4 d$, and $\pi_u$ is regular (Assumption \ref{assump:mu}).
Let $\beta$ be a positive number satisfying 
\[
\beta < \min \{1 - 2 \sqrt{d/k}, 1/(1 + \kappa) \}.
\]
The Bayesian linear regression estimator
\[
\hat{u} = \int {u \cdot p_D(u \mid \{Y_i\}_{i=1}^n) du}
\]
achieves learning rate $n^{-\beta}$.
\label{thm:ex}
\end{Theorem}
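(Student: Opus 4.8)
The plan is to derive Theorem~\ref{thm:ex} as a special case of the general learning-rate result, Theorem~\ref{thm02}: I would instantiate the abstract framework with the linear model and then verify Assumptions~\ref{assump:covLip}--\ref{assump:int} one by one, keeping careful track of how the abstract constants $\mc{C}$ (complexity) and $r$ (integrability) are expressed through the dimension $d$ and the degrees of freedom $k$. In this instance the hypothesis class is a bounded family of linear predictors $h_u(\mb{X})=\mb{X}u$ with $u$ in a bounded set $\mathcal{U}\subset\mathbb{R}^d$ containing $\{\|u\|_2\le M_u\}$, the loss is $\ell(Z,u)=(Y-\mb{X}u)^2$, the posterior-mean $\hat u$ is the framework's generalized Bayes estimator under this linear parametrization, and under the true law $P$ we have $Y=\mb{X}u_0+\epsilon$ with $\epsilon\sim t_k$ independent of $\mb{X}$. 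Writing $a:=\mb{X}(u_0-u)$ and using $\mathbb{E}\epsilon=0$, $\mathrm{Var}(\epsilon)=\sigma^2:=k/(k-2)<\infty$ (valid since $k>2$), a first computation gives
\[
R(u)=\mathbb{E}_P[(\epsilon+a)^2]=\mathbb{E}[a^2]+\sigma^2=(u-u_0)^\top\Sigma(u-u_0)+\sigma^2, \qquad \Sigma:=\mathbb{E}[\mb{X}^\top\mb{X}],
\]
so the optimal risk is $\gamma^*=\sigma^2$, attained at $u_0\in\mathcal{U}$. In particular $u_0\in\mc{H}_{\gamma^*}$, and the assumed regularity of $\pi_u$ is precisely Assumption~\ref{assump:mu}.

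Next I would dispatch the two risk-side regularity conditions. Since $R$ is a nonnegative-definite quadratic form it is convex, and on the bounded hull its gradient $2\Sigma(u-u_0)$ is bounded, hence $R$ is Lipschitz, giving Assumption~\ref{assump:covLip}. For the multi-scale Bernstein condition (Assumption~\ref{assump:Bern}) I would use a single region with $h^*=u_0$ and compute, via independence of $\epsilon$ and $\mb{X}$ and $\mathbb{E}\epsilon=0$,
\[
\mathbb{E}[\ell(Z,u)-\ell(Z,u_0)]^2=\mathbb{E}[(2a\epsilon+a^2)^2]=4\sigma^2\,\mathbb{E}[a^2]+\mathbb{E}[a^4].
\]
Because the design is bounded, $|a|\le M_X\,\mathrm{diam}(\mathcal{U})$, so $\mathbb{E}[a^4]\le(\sup a^2)\,\mathbb{E}[a^2]$ and the right-hand side is $\le B\,\mathbb{E}[a^2]=B\,[R(u)-\gamma^*]$; thus the condition holds with exponent $\alpha=1$. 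It is worth noting that this uses only bounded covariates and the first two moments of $\epsilon$ (not light tails), and that it does not require $\Sigma$ to be invertible, so a possibly non-unique minimizer is harmless.

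The heart of the proof, and the step I expect to be the main obstacle, is Assumptions~\ref{assump:cov_num}--\ref{assump:int}. The loss class $\mc{G}=\{Z\mapsto(Y-\mb{X}u)^2:u\in\mathcal{U}\}$ is a smooth image of the $d$-dimensional set $\mathcal{U}$, and one checks that $u\mapsto(Y-\mb{X}u)^2$ is Lipschitz into $L_2(P)$, which yields the logarithmic covering bound of Assumption~\ref{assump:cov_num} with a complexity constant $\mc{C}$ of order $d$. The universal entropy bound (Assumption~\ref{assump:uni_entropy}) is the delicate one, since the Lipschitz constant into $L_2(Q)$ blows up with the support of $Q$; I would control it by treating $\mc{G}$ as a finite-dimensional (VC-subgraph) class and invoking the standard envelope-rescaled universal entropy estimate. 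For the envelope (Assumption~\ref{assump:int}), the bound $\sup_{u\in\mathcal{U}}(Y-\mb{X}u)^2\le(|\epsilon|+C)^2$ shows $\mathbb{E}\sup_{g\in\mc{G}}|g|^r<\infty$ exactly when $2r<k$, so $r$ may be taken up to a fixed multiple of $k$. The genuinely delicate bookkeeping is to make the two constants mesh: the feasibility requirement $r\ge 4\mc{C}$ together with the tail constraint on $r$ encodes the dimension/tail trade-off, and tracking the sharp constant in the entropy bound (so that effectively $\mc{C}=d/2$) is exactly what turns $r\ge 4\mc{C}$ into the stated hypothesis $k>4d$ and makes $2\sqrt{\mc{C}/r}$ approach $2\sqrt{d/k}$ as $r$ is pushed toward its threshold.

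Finally I would assemble the pieces. With $\alpha=1$, Theorem~\ref{thm02} applies and yields $R(\hat u)\le\gamma^*+\mathcal{O}(n^{-\beta})$ with probability at least $1-\delta$ for every $\beta$ below the Bernstein branch $1-2\sqrt{\mc{C}/r}$ or the prior branch $1/(1+\kappa)$. Fixing a target $\beta<\min\{1-2\sqrt{d/k},\,1/(1+\kappa)\}$ and pushing $r$ toward its threshold (so that $1-2\sqrt{\mc{C}/r}$ approaches $1-2\sqrt{d/k}$) makes $\beta$ admissible, noting that this conservative $\min$-requirement is in particular covered by the $\max$-condition of Theorem~\ref{thm02}; this gives the asserted learning rate $n^{-\beta}$ for $\hat u$. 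Apart from the uniform-in-$Q$ entropy estimate and the constant matching just described, every step is a direct moment computation for the quadratic loss under $t$-distributed noise.
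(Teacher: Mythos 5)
Your overall strategy --- instantiate the general framework with the quadratic loss and feed the verified assumptions into Theorem~\ref{thm02} --- is the same as the paper's, and several of your verifications are correct and essentially match the paper's: the risk identity and $\gamma^*=\sigma^2$, convexity and Lipschitzness of $R$ (Assumption~\ref{assump:covLip}), and the $L_2(P)$ covering bound with exponent of order $d$ (Assumption~\ref{assump:cov_num}, which the paper verifies with $\mc{C}_1=d$, $K_1=M_u$). Your Bernstein verification is in fact cleaner than the paper's: the direct computation $\mbb{E}[\ell(Z,u)-\ell(Z,u_0)]^2=4\sigma^2\mbb{E}[a^2]+\mbb{E}[a^4]\le B\,[R(u)-\gamma^*]$ gives $\alpha=1$ without the paper's detour through $R(u)-R(u_0)\ge D\|u-u_0\|^2$, which needs $\inf_{\|z\|_2=1}\mbb{E}[(\mb{X}z)^2]>0$.

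The genuine gap is exactly at the step you flag as delicate: Assumption~\ref{assump:uni_entropy}. That assumption requires $\sup_Q\log\mathcal{N}(\epsilon,\mc{G},L_2(Q))\le\mc{C}_2\log(K_2/\epsilon)$ at the \emph{absolute} scale $\epsilon$, uniformly over all finitely supported $Q$. For the squared loss with $t$-distributed noise the envelope is unbounded and this assumption is simply false: taking $Q=\delta_{(x_0,y_0)}$ with $x_0\neq 0$ and $|y_0|\to\infty$, the values $\{(y_0-x_0u)^2:\|u\|_2\le M_u\}$ fill an interval of length of order $|y_0|$, so $\mathcal{N}(\epsilon,\mc{G},L_2(Q))\gtrsim |y_0|/\epsilon$ is unbounded over $Q$. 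The VC-subgraph estimate you propose to invoke controls $\mathcal{N}(\epsilon\|F\|_{L_2(Q)},\mc{G},L_2(Q))$, i.e.\ covering at the \emph{envelope-rescaled} radius, and this cannot be converted into the unscaled bound the assumption demands. The paper's proof is explicitly built around this obstruction: it states that it will \emph{not} check Assumption~\ref{assump:uni_entropy}, and instead proves Lemma~\ref{lem:boundemp} (the only place that assumption enters) directly for the quadratic loss, via the pointwise bound $|\ell_D(u_1)-\ell_D(u_2)|\le \frac{2M_X}{D}\bigl(\frac1n\sum_i|Y_i-\mb{X}_iu_0|+M_XM_u\bigr)\,d_P(u_1,u_2)$ followed by Markov's inequality applied to $\frac1n\sum_i|Y_i-\mb{X}_iu_0|$. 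This bypass is the key idea missing from your proposal; without it (or an equivalent substitute), Theorem~\ref{thm02} cannot be applied as a black box.

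A second, related gap is the constant bookkeeping. Your envelope computation is the literal one --- $|g|^r=(Y-\mb{X}u)^{2r}$, hence Assumption~\ref{assump:int} needs $2r<k$ --- but you then assert the entropy constant can be sharpened to $\mc{C}=d/2$ to recover the stated hypotheses. That is not available: under $\Sigma\succ 0$ the map $u\mapsto g_u$ is bi-Lipschitz from a $d$-dimensional ball into $L_2(P)$, so $\log\mathcal{N}(\epsilon,\mc{G},L_2(P))\asymp d\log(1/\epsilon)$ and $\mc{C}$ cannot be taken below $d$. With $\mc{C}=d$ and $r<k/2$, your argument only yields feasibility for $k>8d$ and rate exponent approaching $1-2\sqrt{2d/k}$, which is strictly weaker than the theorem's $k>4d$ and $1-2\sqrt{d/k}$. (For comparison, the paper reaches $r\in[4d,k)$ by bounding $\mbb{E}\sup_u|Y-\mb{X}u|^r$, i.e.\ the $r$-th moment of the residual rather than of the loss $(Y-\mb{X}u)^2$; your more conservative reading of Assumption~\ref{assump:int} is actually at odds with the paper's own accounting, and the discrepancy cannot be papered over by adjusting $\mc{C}$.)
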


The proof of this theorem is in the Appendix.
We observe that when $k > 16 d$, Theorem \ref{thm:ex} implies the Bayesian linear regression estimator achieves fast learning rate.
It is worth noticing that most of the common priors on a bounded set of $\mbb{R}^d$ (for example, uniform distribution) satisfy the regularity condition for prior (Assumption \ref{assump:mu}) with any $\kappa >0$.

\paragraph{Simulations}
To illustrate the result, we use the \texttt{R}-platform to simulate data from the following model:
\[
Y_i = 1 + X^{(1)}_i  + X^{(2)}_i + \epsilon_i, ~~i=1,2,\ldots,n
\]
where $\{X^{(1)}_i\} , \{X^{(2)}_i\}$ are i.i.d. random variables that follow a truncated standard normal distribution (the truncation value is $1$), and $\epsilon_i$ are i.i.d random variables which follow a t-distribution with degree of freedom $k = 5, 10, 20$.
For each degree of freedom, we vary the sample size from $10$ to $10240$ ($n = 10, 20, 40, 80, 160, 320, 640, 1280, 2560, 5120, 10240$), and for each sample size, we simulate $100$ data sets.
We analyze each data set using the standard linear regression (ERM) and the Bayesian linear regression \eqref{eqn:Bayes} with a uniform prior on the ball which is centered at $0$ and has radius $10$.
We explore the generalized posterior distribution of the coefficients using the Metropolis algorithm implemented in the \texttt{R} function \texttt{MCMCmetrop1R} from the package \texttt{MCMCpack} \cite{martin2011}.
We discard the first $20000$ iterations of the Markov chain Monte Carlo and use the next $100000$ iterations to approximate the Bayesian linear regression estimator.
Then, we apply Monte Carlo method to approximate the risk of the estimators and fit a linear regression between the risk (in log-scale) and the sample size (in log-scale) to approximate the rate of convergence of the ERM and the Bayesian linear regression (e.g. Figure \ref{fig:sim}).
We summarize the result of our simulations in Table \ref{tab:res}.
The result confirms that Bayesian linear regression is robust to heavy-tailed noises.
We note that the empirical convergence rate of the Bayesian linear regression (as well as the ERM, which has been investigated in \cite{dinh2016fast}) is faster than its theoretical bound in Theorem \ref{thm:ex}.

\begin{figure}[t]
\begin{center}
\includegraphics[width=1\linewidth]{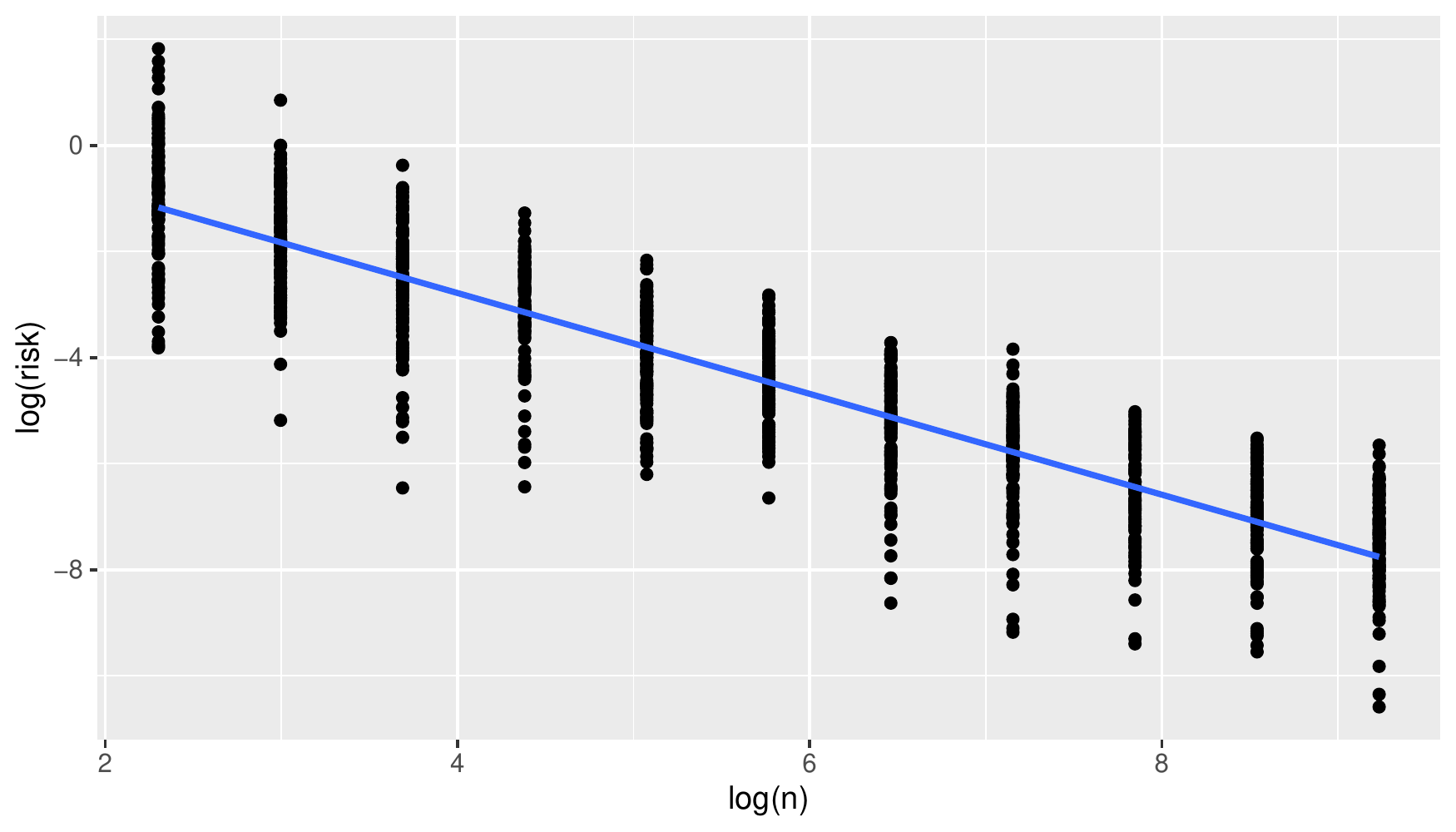}
\caption{A linear regression between the risk (in log-scale) of the Bayesian linear regression and the sample size (in log-scale) when the errors follow the $t$-distribution with $5$ degrees of freedom. The slope of the fitted line approximates the rate of convergence of the estimator.}
\label{fig:sim}
\end{center}
\end{figure}

\begin{table}[h]
\begin{center}
\begin{tabular}{c||c|c}
 Degree of freedom & ERM & Bayesian \\
 \hline
 5 & -0.984 & -0.951 \\
 10 & -1.001 & -0.966 \\
 20 & -1.046 & -0.996
\end{tabular}
\caption{The approximated rate of convergence of the two estimators with $k = 5,10,20$. }
\label{tab:res}
\end{center}
\end{table}

\section{Discussions and Conclusions}
 \label{sec:discussion}

The result of this paper indicates that learning with Bayesian estimators and heavy-tailed losses can obtain convergence rates up to an essential order 
\[
\mc{O}\left (n^{-(1 - 2 \sqrt{\mc{C}/r})/(2 - \min \{ \alpha\})}\right)
\]
where $\alpha$ is the multi-scale Bernstein's order and $r $ is the degree of integrability of the loss. 
This result is consistent with previous works using a frequentist approach \cite{dinh2016fast}. 
We note that for bounded and strongly convex losses, our assumptions can be validated with $\alpha=1$,  $I=1$, and $r= \infty$ and this reduces to the convergence rate $\mc{O}(1/n)$. 

There are several avenues for improvement. 
Firstly, in this work, we consider a setting where the generalized likelihood function has the form $\prod_{i=1}^n Q(Z_i \mid h)$.
In some scenarios, for example, when data are dependent, this setting may not hold.
It would be interesting to see if concentration and learning rates retain in those cases. 
Secondly, although our framework (which relies on the multi-scale Bernstein's condition) allows us to analyze the convergence of generalized Bayes estimators in more general settings than previous approaches, our result requires high-order moments of the loss to guarantee convergence. 
Recently, there has been a growing interest in fast learning rate for convex losses using the small-ball condition \cite{mendelson2015aggregation}, which requires only low-order moments. 
We would like to extend the result in this paper to study and adapt this condition to the case when the optimal hypothesis is non-unique.

Finally, the simulations confirm the robustness of Bayesian linear regression to heavy-tailed noises.
This is an assurance for end-users that Bayesian linear regression is not vulnerable to the violation of the assumption of normal errors.
In particular, no special treatment is needed when the errors follow a t-distribution and the Bayesian estimates converge to the true values at the same rate as their frequentist counterparts.
It is worth noticing that the simulations indicate that the convergence rate is $\mc{O}(1/n)$, which means that our theoretical upper bounds may not be optimal.
An interesting direction for future research is to derive sharper upper bounds and/or lower bounds.

\section*{Acknowledgments}

LSTH was supported by startup funds from Dalhousie University, the Canada Research Chairs program, the NSERC Discovery Grant RGPIN-2018-05447, and the NSERC Discovery Launch Supplement DGECR-2018-00181.

\clearpage

\bibliography{bib}

\begin{thebibliography}{10}
\expandafter\ifx\csname url\endcsname\relax
  \def\url#1{\texttt{#1}}\fi
\expandafter\ifx\csname urlprefix\endcsname\relax\def\urlprefix{URL }\fi
\expandafter\ifx\csname href\endcsname\relax
  \def\href#1#2{#2} \def\path#1{#1}\fi

\bibitem{rousseau2011asymptotic}
J.~Rousseau, K.~Mengersen, Asymptotic behaviour of the posterior distribution
  in overfitted mixture models, Journal of the Royal Statistical Society:
  Series B (Statistical Methodology) 73~(5) (2011) 689--710.

\bibitem{arbel2013bayesian}
J.~Arbel, G.~Gayraud, J.~Rousseau, Bayesian optimal adaptive estimation using a
  sieve prior, Scandinavian journal of statistics 40~(3) (2013) 549--570.

\bibitem{rousseau2016frequentist}
J.~Rousseau, On the frequentist properties of {Bayesian} nonparametric methods,
  Annual Review of Statistics and Its Application 3 (2016) 211--231.

\bibitem{grunwald2011safe}
P.~Gr{\"u}nwald, J.~S. Jones, J.~de~Winter, {\'E}.~Smith, {Safe Learning:
  bridging the gap between Bayes, MDL and statistical learning theory via
  empirical convexity.}, in: COLT, 2011, pp. 397--420.

\bibitem{audibert2007fast}
J.-Y. Audibert, A.~B. Tsybakov, Fast learning rates for plug-in classifiers,
  The Annals of statistics 35~(2) (2007) 608--633.

\bibitem{dinh2015learning}
V.~Dinh, L.~S.~T. Ho, N.~V. Cuong, D.~Nguyen, B.~T. Nguyen, Learning from
  non-iid data: Fast rates for the one-vs-all multiclass plug-in classifiers,
  in: Theory and Applications of Models of Computation, Springer, 2015, pp.
  375--387.

\bibitem{mehta2014stochastic}
N.~A. Mehta, R.~C. Williamson, From stochastic mixability to fast rates, in:
  Advances in Neural Information Processing Systems, 2014, pp. 1197--1205.

\bibitem{bartlett2006empirical}
P.~L. Bartlett, S.~Mendelson, Empirical minimization, Probability Theory and
  Related Fields 135~(3) (2006) 311--334.

\bibitem{vanerven15a}
T.~van Erven, P.~D. Gr{{\"u}}nwald, N.~A. Mehta, M.~D. Reid, R.~C. Williamson,
  Fast rates in statistical and online learning, Journal of Machine Learning
  Research 16 (2015) 1793--1861.

\bibitem{dinh2016fast}
V.~C. Dinh, L.~S. Ho, B.~Nguyen, D.~Nguyen, Fast learning rates with
  heavy-tailed losses, in: Advances in Neural Information Processing Systems,
  2016, pp. 505--513.

\bibitem{zhang2006e}
T.~Zhang, From $\epsilon$-entropy to {KL}-entropy: Analysis of minimum
  information complexity density estimation, The Annals of Statistics 34~(5)
  (2006) 2180--2210.

\bibitem{zhang2006information}
T.~Zhang, Information-theoretic upper and lower bounds for statistical
  estimation, IEEE Transactions on Information Theory 52~(4) (2006) 1307--1321.

\bibitem{lecue2012general}
G.~Lecu{\'e}, S.~Mendelson, General nonexact oracle inequalities for classes
  with a subexponential envelope, The Annals of Statistics 40~(2) (2012)
  832--860.

\bibitem{hsu2016loss}
D.~Hsu, S.~Sabato, Loss minimization and parameter estimation with heavy tails,
  Journal of Machine Learning Research 17~(18) (2016) 1--40.

\bibitem{mendelson2015aggregation}
S.~Mendelson, On aggregation for heavy-tailed classes, Probability Theory and
  Related Fields (2017) 1--34.

\bibitem{Grunwald16}
P.~D. Gr{\"u}nwald, N.~A. Mehta, Fast rates with unbounded losses, arXiv
  preprint arXiv:1605.00252.

\bibitem{bachl2019inlabru}
F.~E. Bachl, F.~Lindgren, D.~L. Borchers, J.~B. Illian, inlabru: an {R package
  for Bayesian spatial modelling from ecological survey data}, Methods in
  Ecology and Evolution 10~(6) (2019) 760--766.

\bibitem{gill2017relaxed}
M.~S. Gill, L.~S.~T. Ho, G.~Baele, P.~Lemey, M.~A. Suchard, A relaxed
  directional random walk model for phylogenetic trait evolution, Systematic
  biology 66~(3) (2017) 299--319.

\bibitem{ho2018birth}
L.~S.~T. Ho, J.~Xu, F.~W. Crawford, V.~N. Minin, M.~A. Suchard,
  Birth/birth-death processes and their computable transition probabilities
  with biological applications, Journal of mathematical biology 76~(4) (2018)
  911--944.

\bibitem{ho2018direct}
L.~S.~T. Ho, F.~W. Crawford, M.~A. Suchard, et~al., Direct likelihood-based
  inference for discretely observed stochastic compartmental models of
  infectious disease, The Annals of Applied Statistics 12~(3) (2018)
  1993--2021.

\bibitem{geweke2003bayesian}
J.~Geweke, G.~Gowrisankaran, R.~J. Town, Bayesian inference for hospital
  quality in a selection model, Econometrica 71~(4) (2003) 1215--1238.

\bibitem{brownlees2015empirical}
C.~Brownlees, E.~Joly, G.~Lugosi, Empirical risk minimization for heavy-tailed
  losses, The Annals of Statistics 43~(6) (2015) 2507--2536.

\bibitem{lugosi2019mean}
G.~Lugosi, S.~Mendelson, Mean estimation and regression under heavy-tailed
  distributions: A survey, Foundations of Computational Mathematics 19~(5)
  (2019) 1145--1190.

\bibitem{bachem2017uniform}
O.~Bachem, M.~Lucic, S.~H. Hassani, A.~Krause, Uniform deviation bounds for
  k-means clustering, in: Proceedings of the 34th International Conference on
  Machine Learning-Volume 70, JMLR. org, 2017, pp. 283--291.

\bibitem{christmann2009consistency}
A.~Christmann, I.~Steinwart, A.~van Messem, On consistency and robustness
  properties of support vector machines for heavy-tailed distributions,
  Statistics and Its Interface 2~(3) (2009) 311--327.

\bibitem{han2019convergence}
Q.~Han, J.~A. Wellner, et~al., Convergence rates of least squares regression
  estimators with heavy-tailed errors, The Annals of Statistics 47~(4) (2019)
  2286--2319.

\bibitem{nguyen2016borrowing}
X.~Nguyen, et~al., Borrowing strengh in hierarchical bayes: Posterior
  concentration of the {Dirichlet} base measure, Bernoulli 22~(3) (2016)
  1535--1571.

\bibitem{dinh2017convergence}
V.~Dinh, A.~E. Rundell, G.~T. Buzzard, Convergence of griddy gibbs sampling and
  other perturbed markov chains, Journal of Statistical Computation and
  Simulation 87~(7) (2017) 1379--1400.

\bibitem{grunwald2017inconsistency}
P.~Gr{\"u}nwald, T.~Van~Ommen, et~al., Inconsistency of {B}ayesian inference
  for misspecified linear models, and a proposal for repairing it, Bayesian
  Analysis 12~(4) (2017) 1069--1103.

\bibitem{de2019safe}
R.~de~Heide, A.~Kirichenko, N.~Mehta, P.~Gr{\"u}nwald, {Safe-Bayesian
  Generalized Linear Regression}, arXiv preprint arXiv:1910.09227.

\bibitem{germain2016pac}
P.~Germain, F.~Bach, A.~Lacoste, S.~Lacoste-Julien, {PAC-Bayesian theory meets
  Bayesian inference}, in: Advances in Neural Information Processing Systems,
  2016, pp. 1884--1892.

\bibitem{grunwald2012safe}
P.~Gr{\"u}nwald, The safe {Bayesian}, in: International Conference on
  Algorithmic Learning Theory, Springer, 2012, pp. 169--183.

\bibitem{freund1995desicion}
Y.~Freund, R.~E. Schapire, A desicion-theoretic generalization of on-line
  learning and an application to boosting, in: European conference on
  computational learning theory, Springer, 1995, pp. 23--37.

\bibitem{kivinen1999averaging}
J.~Kivinen, M.~K. Warmuth, Averaging expert predictions, in: European
  Conference on Computational Learning Theory, Springer, 1999, pp. 153--167.

\bibitem{cuong2013generalization}
N.~V. Cuong, L.~S.~T. Ho, V.~Dinh, Generalization and robustness of batched
  weighted average algorithm with {V}-geometrically ergodic {M}arkov data, in:
  International Conference on Algorithmic Learning Theory, Springer, 2013, pp.
  264--278.

\bibitem{freund2004generalization}
Y.~Freund, Y.~Mansour, R.~E. Schapire, Generalization bounds for averaged
  classifiers, Annals of Statistics (2004) 1698--1722.

\bibitem{ghosal2000convergence}
S.~Ghosal, J.~K. Ghosh, A.~W. Van Der~Vaart, Convergence rates of posterior
  distributions, Annals of Statistics (2000) 500--531.

\bibitem{mendelson2015learning}
S.~Mendelson, Learning without concentration, Journal of the ACM (JACM) 62~(3)
  (2015) 21.

\bibitem{martin2011}
A.~D. Martin, K.~M. Quinn, J.~H. Park,
  \href{http://www.jstatsoft.org/v42/i09/}{{MCMCpack}: {M}arkov chain {M}onte
  {C}arlo in {R}}, Journal of Statistical Software 42~(9) (2011) 22.
\newline\urlprefix\url{http://www.jstatsoft.org/v42/i09/}

\end{thebibliography}

\newpage

\appendix

\section{Detailed proofs}

\begin{proof}[Proof of Theorem $\ref{thm:postcon}$]

We denote 
\[
r_n(h) =  \exp \{ - \ell_D(h) \}.
\] 
Then, the posterior can be calculated by the following formula:
\[
p_D(h) = \left[{\frac{r_n(h)}{\left\|{r_n(h)}\right\|_n}}\right]^{n},
\]
where
\[
\left\|{r_n(h)}\right\|_n = \left({\int_\mc{H} {|r_n(h)|^n d\mu}}\right)^{1/n}.
\]

For any $i \in I$, we apply Lemma \ref{lem:boundrisk} with $a = 12 C_{\delta, r, \beta}$ to obtain
\[
\ell_D(h) - \ell_D(h_i^*) > 3 C_{\delta, r, \beta} \epsilon, \quad
\forall h \in (\mc{H}_i \setminus \mc{H}_{\gamma^* + 12 C_{\delta, r, \beta} \epsilon}) \cap \mc{H}^{\epsilon}
\]
with probability $1 - \delta$.

By Lemma \ref{lem:boundemp}, we derive that
\[
\ell_D(h) - \ell_D(h') > C_{\delta, r, \beta} \epsilon, \quad
\forall h \in \mc{H}_i \setminus \mc{H}_{\gamma^* + 12 C_{\delta, r, \beta} \epsilon},~h' \in \mc{B}(h_i^*,\epsilon)
\]
with probability $1 - 3 \delta$.

Hence, 
\[
r_n(h) \leq e^{ - C_{\delta, r, \beta} \epsilon} r_n(h'), \quad
\forall h \in \mc{H}_i \setminus \mc{H}_{\gamma^* + 12 C_{\delta, r, \beta} \epsilon},~h' \in \mc{B}(h_i^*,\epsilon)
\]
with probability at least $1 - 3\delta$. 

Therefore,
\[
\sup_{h \in \mc{H}_i \setminus \mc{H}_{\gamma^* + 12 C_{\delta, r, \beta} \epsilon}} r_n(h) \leq e^{ - C_{\delta, r, \beta} \epsilon} \inf_{h' \in \mc{B}(h_i^*, \epsilon)} r_n(h'). 
\]
with probability at least $1 - 3\delta$. 

We have
\[
\|r_n\|_n  = \left( \int_{\mc{H}}{|r_n(h)|^n d\mu} \right)^{1/n} 
 \geq \left( \int_{ \mc{B}(h_i^*, \epsilon)}{|r_n(h)|^n d\mu} \right)^{1/n} 
 = \inf_{h' \in \mc{B}(h_i^*, \epsilon)} r_n(h') \mu ( \mc{B}(h_i^*,  \epsilon))^{1/n},
\]
with probability at least $1 - 3\delta$. 

Consequently, when $n$ is sufficient large,
\begin{align*}
\sup_{h \in \mc{H}_i \setminus \mc{H}_{\gamma^* + 12 C_{\delta, r, \beta} \epsilon}} p_D(h) 
&= \sup_{h \in \mc{H}_i \setminus \mc{H}_{\gamma^* + 12 C_{\delta, r, \beta} \epsilon}} \left ( \frac{r_n(h)}{\|r_n\|_n} \right )^n 
\leq \frac{e^{- 2C_{\delta, r, \beta} n \epsilon}}{\mu ( \mc{B}(h_i^*, \epsilon))} \\
& \leq \frac{1}{C_1} \exp( - n 2 C_{\delta, r, \beta} \epsilon + C_2\epsilon^{- \kappa}) \leq \frac{1}{C_1} \exp( - n C_{\delta, r, \beta} \epsilon),
\end{align*}
with probability at least $1 - 3\delta$.

Under Assumption \ref{assump:Bern}, $I$ is finite and $\mc{H} = \bigcup_{i \in I} \mc{H}_i$. 
Therefore, the proof is completed by taking a union bound over $I$.
\end{proof}

\begin{proof}[Proof of Theorem \ref{thm:ex}]
Let $u_0$ be the true value of $u$.
We will verify Assumptions \ref{assump:covLip}, \ref{assump:Bern}, \ref{assump:cov_num}, and \ref{assump:int}.
Instead of checking Assumption \ref{assump:uni_entropy}, we will prove Lemma \ref{lem:boundemp} directly.

Assumption \ref{assump:covLip}: 
The risk function $R(u) = \mbb{E}[ (Y - \mb{X} u )^2]$ is convex and Lipschitz in $u$.
Indeed,
\begin{align*}
R \left ( \frac{u_1 + u_2}{2} \right ) &= \mbb{E} \left [  \left (Y - \mb{X} \frac{u_1 + u_2}{2} \right )^2 \right ] \\ 
& \leq \frac{1}{2} ( \mbb{E}[ (Y - \mb{X} u_1 )^2] + \mbb{E}[ (Y - \mb{X} u_2 )^2]) \\
& = \frac{1}{2}(R(u_1) + R(u_2)),
\end{align*}
and
\begin{multline*}
|R(u_1) - R(u_2)| = | \mbb{E}[ (Y - \mb{X} u_1 )^2 - (Y - \mb{X} u_2 )^2] |\\
\leq  M_X \|u_2 - u_1\|_2  [2 \mbb{E}|Y - \mb{X}u_0| +4 M_X M_u ]\\
\leq M_X \|u_2 - u_1\|_2  \{2 [\mbb{E}(Y - \mb{X}u_0)^2]^{1/2} +4 M_X M_u \} \\
= M_X \left (\frac{2 k^{1/2}}{(k - 2)^{1/2}} +4 M_X M_u \right ) \|u_2 - u_1\|_2.
\end{multline*}

Assumption \ref{assump:Bern}: 
We first note that $u_0$ is the only optimal hypothesis.
Indeed,
\begin{align*}
R(u) - R(u_0) &= \mbb{E}[\mb{X}(u - u_0) (2 Y - \mb{X}(u + u_0))] \\
& = \mbb{E}[ \mbb{E} [\mb{X}(u - u_0) (2 Y - \mb{X}(u + u_0)) \mid \mb{X} ] \\
& = \mbb{E}[ \mb{X} (u - u_0) (2\mbb{E} [ Y \mid \mb{X}] - \mb{X}(u + u_0)) ] \\
& = \mbb{E}([\mb{X} (u - u_0)]^2).
\end{align*}
\[
\inf_{u \ne u_0} \frac{R(u) - R(u_0)}{\| u - u_0 \|^2_2} = \inf_{\| z \|_2 = 1} \mbb{E}([\mb{X} z]^2).
\]

Since $\mbb{E}([\mb{X} z]^2) > 0$ for all $\| z \|_2 = 1$ then $\inf_{\| z \|_2 = 1} \mbb{E}([\mb{X} z]^2) \geq D > 0$.
Therefore, $u_0$ is the only optimal hypothesis and $R(u) - R(u_0) \geq D \|u - u_0\|^2$.
Note that we have proved $\mbb{E}\{[(Y - \mb{X} u_1 )^2 - (Y - \mb{X} u_2 )^2]^2\} \leq C_0 \|u_1 - u_2\|^2$.
We conclude that the multi-scale Bernstein condition is satisfied with $\alpha = 1$.

Assumption \ref{assump:cov_num}:
\begin{align*}
& [d_P(u_1, u_2)]^2 = \mbb{E}\{[(Y - \mb{X} u_1 )^2 - (Y - \mb{X} u_2 )^2]^2\} \\
& =  \mbb{E}\{[\mb{X} (u_1 - u_2)]^2[2Y - \mb{X}(u_1 + u_2)]^2\} \\
& \leq M_X \|u_1 - u_2\|_2^2 \mbb{E}\{[2Y - \mb{X}(u_1 + u_2)]^2\} \\
& \leq M_X \|u_1 - u_2\|_2^2 \{8\mbb{E}[(Y - \mb{X}u_0)^2] + 32M_X^2 M_u^2\} \\
& = M_X \left [ 8 \frac{k}{k - 2} + 32 M_X^2 M_u^2 \right ] ||u_1 - u_2||^2 = C_0 \|u_1 - u_2\|^2.
\end{align*}

Therefore, Assumption \ref{assump:cov_num} holds with $\mc{C}_1 = d$ and $K_1 = M_u$.

Assumption \ref{assump:int}:
\begin{align*}
\mbb{E}[ \sup_{u} ( Y - \mb{X} u )^{r}] & \leq \mbb{E}[ \sup_{u} (|Y - \mb{X} u_0 | + 2 M_u M_X)^{r}] \\
&=  \mbb{E}[(| Y - \mb{X} u_0 |+ 2 M_u M_X)^{r}] \leq W
\end{align*}
when $r < k$.
Then Assumption \ref{assump:int} is satisfied with any $r \in [4 d, k)$.

Lemma \ref{lem:boundemp}:
Note that
\[
\frac{[d_P(u_1, u_2)]^2}{\|u_1 - u_2\|_2^2} 
= \mbb{E} \left \{\left [\mb{X} \frac{u_1 - u_2}{\| u_1 - u_2 \|_2} \right ]^2[2Y - \mb{X}(u_1 + u_2)]^2 \right \} > 0
\]
for all $u_1, u_2$.
Since $u_1, u_2$ are bounded, we have
\[
\frac{[d_P(u_1, u_2)]^2}{\|u_1 - u_2\|_2^2} \geq D > 0.
\]

Therefore,
\begin{align*}
& |\ell_D(u_1) - \ell_D(u_2)| \\
& = \left | \frac{1}{n} \sum_{i=1}^n{(Y_i - \mb{X}_i u_1)^2 - (Y_i - \mb{X}_i u_2)^2} \right | \\
& \leq  \frac{1}{n} \sum_{i=1}^n{2 (| Y_i - \mb{X}_i u_0 | + M_X M_u) M_X \|u_1 -u_2 \|_2} \\
& \leq \frac{1}{n} \sum_{i=1}^n{2 (| Y_i - \mb{X}_i u_0 | + M_X M_u) M_X \frac{d_P(u_1, u_2)}{D}}.
\end{align*}

So,
\[
\sup_{u_1 \in \mc{H}, u_2 \in \mc{B}(u_1, \epsilon)} |\ell_D(u_1) - \ell_D(u_2)| 
\leq \frac{1}{n} \sum_{i=1}^n{2 (| Y_i - \mb{X}_i u_0 | + M_X M_u) M_X \frac{\epsilon}{D}}.
\]

Note that, for all $M > 0$,
\[
\Pr \left (\frac{1}{n} \sum_{i=1}^n {| Y_i - \mb{X}_i u_0 |} \geq M \right ) \leq \frac{\mbb{E}| Y_1 - \mb{X}_1 u_0 |}{M} 
\leq \frac{[\mbb{E}( Y_1 - \mb{X}_1 u_0)^2]^{1/2}}{M} \leq \frac{k^{1/2}}{M (k - 2)^{1/2}}.
\]

Hence, we can choose $M_\delta$ such that
\[
\sup_{u_1 \in \mc{H}, u_2 \in \mc{B}(u_1, \epsilon)} |\ell_D(u_1) - \ell_D(u_2)| 
\leq \frac{2(M_\delta + M_X M_u)M_X}{D}\epsilon
\]
with probability at least $1 - \delta$.

\end{proof}

\end{document}